%
%

\input ./style/arxiv-general.cfg
\documentclass[MSNbibl,number,citesort,dvips]{arxbj}
\makeatletter
   \@ifpackageloaded{graphicx}{}{\usepackage{graphicx}}
\makeatother


\aid{0}
\volume{21}
\issue{3}
\pubyear{2015}
\firstpage{1467}
\lastpage{1493}
\doi{10.3150/14-BEJ611} 

\makeatletter

\newcommand{\RR}{\mathbb{R}}
\newcommand{\ZZ}{\mathbb{Z}}
\newcommand{\NN}{\mathbb{N}}
\newcommand{\cone}{\operatorname{cone}}
\newcommand{\PP}{\mathrm{P}}
\newcommand{\pa}{\operatorname{pa}}
\newcommand{\ch}{\operatorname{ch}}
\newcommand{\an}{\operatorname{an}}
\newcommand{\indep}{\mathop{\perp\!\!\!\perp}}
\newcommand{\indtri}[4]{#1 \indep #2 \, | \, #3 \, [#4] }
\newcommand{\mor}{\mathrm{mor}}
\newcommand{\clo}[1]{\overline{#1}}
\newcommand{\clod}[1]{\overline{#1'}}
\newcommand{\uedge}{\mathop{\,\rule[0.5ex]{0.8em}{0.1ex}}\,}

\newtheorem{them}{Theorem}[section]
\newtheorem{lem}[them]{Lemma}
\newtheorem{cor}[them]{Corollary}
\newtheorem{prop}[them]{Proposition}
\newremark{rem}[them]{Remark}
\newremark{exmp}[them]{Example}
\newproclaim{defn}[them]{Definition}

\newcommand{\eqref}[1]{(\ref{#1})}

\renewcommand{\emptyset}{\varnothing}

\makeatother

\begin{document}
\begin{frontmatter}

\title{Standard imsets for undirected and chain graphical models}
\runtitle{Imsets for undirected and chain graphical models}

\begin{aug}
\author[1]{\inits{T.}\fnms{Takuya}~\snm{Kashimura}\thanksref{1}} 
\and
\author[1,2]{\inits{A.}\fnms{Akimichi}~\snm{Takemura}\corref{}\thanksref{1,2}\ead[label=e2]{takemura@stat.t.u-tokyo.ac.jp}}
\address[1]{Department of
Mathematical Informatics,
Graduate School of Information Science and Technology,
University of Tokyo, 7-3-1 Hongo, Bunkyo-ku, Tokyo, 113-8656, Japan.\\ \printead{e2}}
\address[2]{JST, CREST, 5 Sanbancho, Chiyoda-ku, Tokyo, 102-0075, Japan}
\end{aug}

\received{\smonth{2} \syear{2011}}
\revised{\smonth{1} \syear{2014}}

%
\begin{abstract}
We derive standard imsets for undirected graphical models and
chain graphical models. Standard imsets for undirected graphical models
are described in terms of minimal triangulations for
maximal prime subgraphs of the undirected graphs. For describing
standard imsets
for chain graphical models, we first define a triangulation of
a chain graph. We then use the triangulation to generalize our results
for the undirected graphs to chain graphs.
\end{abstract}

%
\begin{keyword}
\kwd{conditional independence}
\kwd{decomposable graph}
\kwd{maximal prime subgraph}
\kwd{triangulation}
\end{keyword}

\end{frontmatter}

\section{Introduction}
\label{sec:introduction}
The notion of imsets introduced by Studen\'y \cite{stu2005}
provides a very convenient algebraic method for encoding
all conditional independence (CI) models which hold under
a discrete probability distribution.
However, a class of imsets does not satisfy the uniqueness property:
a number of different imsets represent the same CI model.

Thus some questions related to the uniqueness property arise \cite
{stu2005}. One of them is
the task of characterizing equivalent imsets. For example, in the case
of classical graphical models \cite{lauritzen},
their equivalence classes are characterized by Andersson \textit{et al.} \cite
{andersson} and Frydenberg \cite{frydenberg}
in graphical terms.
Studen\'y \cite{stu1994} related a CI model induced by a imset to some
face of
a special polyhedral cone, and
an algorithm for CI inference based on this cone is studied in \cite{bou2010}.

Another question is to find a suitable representative for every
equivalence class.
This is motivated by a practical question about learning CI models (see
Section~4.4 in \cite{stu2001} and Section~4 in \cite{vomstu}).
As a subproblem of this, explicit expressions of imsets for important
classes of
graphical models, such as directed acyclic graphical (DAG) models and
decomposable models,
are given in \cite{stu2005}.
Imsets for some chain graphical (CG) models are also known \cite{sturovste}.
They are called standard imsets and have attractive simple forms.
One of their advantages is that they give a simple method to test whether
two graphs have the same CI model.
Another advantage is that it provides a translation of graphical models
into the framework of imsets.
Thus standard imsets offer a new algebraic approach for learning
graphical models \cite{stu2010,hem2010}.

In this paper, we derive standard imsets for undirected graphical (UG)
models and
general CG models. Our standard imsets generalize those for
DAG models and decomposable models. For UG models, we consider
all minimal triangulations of an undirected graph in accordance with
maximal prime
subgraphs and then use the standard imsets for minimal triangulations
(which are decomposable
models) for defining our standard imset. For CG models,
we first define a triangulation of a chain graph.
We then use the triangulation to generalize our results for undirected
graphs to chain graphs.

The organization of the paper is as follows.
In Section~\ref{sec:preliminaries}, we summarize basic definitions
and known facts on imsets and graphs, including standard imsets for
DAG models and decomposable models.
In Section~\ref{sec:UG}, we derive standard imsets for UG models.
In Section~\ref{sec:CG}, we introduce a notion of triangulation of a
chain graph and
based on the triangulation we derive standard imsets for CG models.
We conclude the paper with some remarks in Section~\ref{sec:remarks}.

\section{Preliminaries}
\label{sec:preliminaries}
In this section, we summarize our notation, definitions and relevant
preliminary results concerning conditional independence, imsets and
graphical models.

\subsection{Conditional independence and imsets}
\label{sec:imset}
First, we set up notation for conditional independence and
imsets following Studen\'y \cite{stu2005}.

Let $N$ be a finite set of variables
and let $\mathcal{P}(N) = \{ A\dvt  A \subseteq N \}$ denote the power
set of $N$.
For convenience, we write the union $A \cup B$ of subsets of $N$ as $AB$.
A singleton set $\{i\}$ is simply written as $i$.
As usual,
$\RR$, $\ZZ$, and $\NN$ denote
reals, integers and natural numbers, respectively.
For pairwise disjoint subsets, $A,B,C \subseteq N$,
we write this triplet by $\langle A, B \, | \, C \rangle$, and
the set of all disjoint triplets $\langle A, B \, | \, C \rangle$ over
$N$ by $\mathcal{T}(N)$.
As usual, for a probability distribution $\PP$ over $N$,
$\indtri{A}{B}{C}{\PP}$ denotes the conditional independence
statement of
variables in $A$ and in $B$ given the variables in $C$ under $\PP$.
The case $C = \emptyset$ corresponds to the marginal independence of
$A$ and $B$.
In this paper, we regard a triplet $\langle A, B \, | \, C \rangle$ as
an independence
statement.
Then the set of conditional independence statements under $\PP$ is
denoted as
\begin{eqnarray*}
\mathcal{M}_{\PP} = \bigl\{ \langle A, B \, | \, C \rangle \in\mathcal
{T}(N)\dvt  \indtri{A} {B} {C} {\PP} \bigr\}. %
\end{eqnarray*}
We call
$\mathcal{M}_{\PP}$
the \textit{conditional independence model} induced by $\PP$.

An \textit{imset} over $N$ is an integer-valued function $u \dvtx  \mathcal
{P}(N) \rightarrow\ZZ$, or alternatively,
an element of $\ZZ^{|\mathcal{P}(N)|}=\ZZ^{2^{|N|}}$.
The identifier $\delta_A$ of a set $A \subseteq N$ is defined as
\begin{eqnarray*}
\delta_A(B) &= %
\cases{ 1, &\quad  $B = A$,
\cr
0, & \quad $B \neq A$,
$B \subseteq N$. } %
\end{eqnarray*}
For a triplet $\langle A, B \, | \, C \rangle \in\mathcal{T}(N)$, a
\textit
{semi-elementary imset} $u_{\langle A, B \, | \, C \rangle}$ is
defined as
\begin{eqnarray*}
u_{\langle A, B \, | \, C \rangle} = \delta_{ABC} + \delta_C -
\delta_{AC} - \delta _{BC}. %
\end{eqnarray*}
If $A = a$ and $B = b$ are singletons, the imset $u_{\langle a, b \, |
\, C \rangle}$
is called \textit{elementary}.
The set of all elementary imsets is denoted by $\mathcal{E}(N)$.
Let $\cone(\mathcal{E}(N))\subseteq{\mathbb R}^{2^{|N|}}$ be
the polyhedral cone generated by all the elementary imsets. It can be
shown that
every elementary imset is a generator of an extreme ray of the $\cone
(\mathcal{E}(N))$
\cite{stu1994}.
A \textit{combinatorial imset} is an imset which can be written as a
non-negative integer combination of elementary imsets.
The set of all combinatorial imsets is denoted by $\mathcal{C}(N)$.
Let
\[
\mathcal{S}(N) = \cone\bigl(\mathcal{E}(N)\bigr) \cap\ZZ^{|\mathcal{P}(N)|}.
\]
An element of $\mathcal{S}(N)$ is called
a \textit{structural imset}. %
Note that $\mathcal{C}(N) \subseteq\mathcal{S}(N)$ by definition,
however, it is known that
this inclusion is strict for $|N| \ge5$ \cite{hemmecke}.

A conditional independence statement induced by a structural imset is
defined as follows:
%
\begin{defn}%
For $u \in\mathcal{S}(N)$ and a triplet $\langle A, B \, | \, C
\rangle \in\mathcal{T}(N)$,
we define a \textit{conditional independence statement with respect to
$u$} as
\begin{eqnarray*}
\indtri{A} {B} {C} {u} \quad \iff\quad \exists k \in\NN,\qquad  k \cdot u - u_{\langle A,
B \, | \, C \rangle} \in
\mathcal{S}(N). %
\end{eqnarray*}
The independence model induced by $u$ is denoted by
\[
\mathcal{M}_u = \bigl\{ \langle A, B \, | \, C \rangle \in
\mathcal{T}(N) \dvt  \indtri {A} {B} {C} {u} \bigr\}.
\]
\end{defn}
It can be shown that the structure of conditional independence models
induced by structural imsets
depends only on the face lattice of $\cone(\mathcal{E}(N))$, not on
each imset \cite{stu1994}.
Therefore implications of conditional independence models induced by imsets
correspond to those of faces of $\cone(\mathcal{E}(N))$.
The next lemma, which is very useful for our proofs in later sections,
follows from this fact.
%
\begin{lem}[(Studen\'y \cite{stu2005})] \label{lem:include}
For $u,u' \in\mathcal{S}(N)$,
\begin{eqnarray*}
\mathcal{M}_{u'} \subseteq\mathcal{M}_u \quad \iff\quad \exists k \in
\NN,\qquad  k \cdot u - u' \in\mathcal{S}(N).
\end{eqnarray*}
\end{lem}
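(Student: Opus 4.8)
The plan is to translate the statement into the geometry of the polyhedral cone $\cone(\mathcal{E}(N))$ and to exploit the fact, quoted in the text, that the model $\mathcal{M}_u$ depends only on the face of the cone in which $u$ lies. First I would note that in the defining condition $k\cdot u-u_{\tri{A}{B}{C}}\in\mathcal{S}(N)$ the integrality is automatic, since $u$ and $u_{\tri{A}{B}{C}}$ are integer vectors and $k\in\NN$; hence throughout ``$\in\mathcal{S}(N)$'' may be replaced by ``$\in\cone(\mathcal{E}(N))$''. Writing $F_u$ for the smallest face of $\cone(\mathcal{E}(N))$ containing $u$ (equivalently, the face whose relative interior contains $u$), the key reformulation I would establish is
\[
\indtri{A}{B}{C}{u}\iff u_{\tri{A}{B}{C}}\in F_u .
\]
For ``$\Leftarrow$'' here I would use the domination property of relative interiors of cones: if $u\in\operatorname{relint}(F_u)$ and $u_{\tri{A}{B}{C}}\in F_u$, then $-u_{\tri{A}{B}{C}}$ is a feasible direction at $u$, so $k\cdot u-u_{\tri{A}{B}{C}}\in F_u\subseteq\cone(\mathcal{E}(N))$ for all large $k$. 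For ``$\Rightarrow$'' I would use the face property that $a+b\in F_u$ with $a,b\in\cone(\mathcal{E}(N))$ forces $a,b\in F_u$: applying it to $k\cdot u=(k\cdot u-u_{\tri{A}{B}{C}})+u_{\tri{A}{B}{C}}$, and noting that $k\cdot u\in\operatorname{relint}(F_u)$ while each semi-elementary imset is a nonnegative combination of elementary imsets and so lies in $\cone(\mathcal{E}(N))$, gives $u_{\tri{A}{B}{C}}\in F_u$.

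With this reformulation in hand, the direction ``$\Leftarrow$'' of the lemma is purely algebraic and needs no face theory. I would assume $k\cdot u-u'\in\cone(\mathcal{E}(N))$ and take any $\tri{A}{B}{C}\in\mathcal{M}_{u'}$, so that $m\cdot u'-u_{\tri{A}{B}{C}}\in\cone(\mathcal{E}(N))$ for some $m\in\NN$. Adding $m$ times the first membership to the second yields $mk\cdot u-u_{\tri{A}{B}{C}}\in\cone(\mathcal{E}(N))$, since the cone is closed under addition and nonnegative scaling; hence $\tri{A}{B}{C}\in\mathcal{M}_u$.

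For the direction ``$\Rightarrow$'' I would argue through face inclusion. By the reformulation, $\mathcal{M}_{u'}\subseteq\mathcal{M}_u$ says exactly that every elementary imset lying in $F_{u'}$ also lies in $F_u$ (it suffices to test on singleton triplets, since these label the extreme rays). As $\cone(\mathcal{E}(N))$ is finitely generated by the elementary imsets and each such imset generates an extreme ray, any face equals the cone hull of the elementary imsets it contains; in particular $F_{u'}=\cone(\mathcal{E}(N)\cap F_{u'})$. Since all these generators lie in $F_u$, we obtain $F_{u'}\subseteq F_u$, whence $u'\in F_{u'}\subseteq F_u$. Finally, $u\in\operatorname{relint}(F_u)$ and $u'\in F_u$, so the same relative-interior domination property gives $k\cdot u-u'\in F_u\subseteq\cone(\mathcal{E}(N))$ for large $k$, i.e.\ $k\cdot u-u'\in\mathcal{S}(N)$.

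The main obstacle is the ``$\Rightarrow$'' direction, and specifically the passage from the combinatorial containment $\mathcal{M}_{u'}\subseteq\mathcal{M}_u$ to the geometric inclusion $F_{u'}\subseteq F_u$. This rests on two facts that I would invoke carefully: that a face of the cone is generated by the elementary imsets it contains (using that the elementary imsets are precisely the extreme-ray generators, as recalled in the text), together with the relative-interior domination property of polyhedral cones. The ``$\Leftarrow$'' direction and the reformulation of $\indtri{A}{B}{C}{u}$ are comparatively routine.
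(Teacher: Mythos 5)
The paper does not actually prove this lemma: it is imported from Studen\'y \cite{stu2005}, with only the preceding remark that it follows from the fact that $\mathcal{M}_u$ depends only on the face of $\cone(\mathcal{E}(N))$ whose relative interior contains $u$. Your argument is a correct, self-contained realization of exactly that face-lattice idea --- the reformulation $\indtri{A}{B}{C}{u}\iff u_{\tri{A}{B}{C}}\in F_u$, the observation that integrality of $k\cdot u-u'$ is automatic, the fact that a face of a finitely generated cone is the conic hull of the generators it contains, and the relative-interior domination step are all sound --- so it matches the route the paper gestures at.
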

The method of imsets is very powerful, because
conditional independence models induced by discrete probability
measures are
always represented by structural imsets.
%
\begin{them}[(Studen\'y \cite{stu2005})] \label{thm:main}
For every discrete probability measure $\PP$ over $N$,
there exists a structural imset $u \in\mathcal{S}(N)$ such that
$\mathcal{M}_u = \mathcal{M}_{\PP}$.
\end{them}

\subsection{Graphs and graphical models}
\label{sec:graphs}

Here we summarize relevant facts on graphs and graphical models following
Lauritzen \cite{lauritzen}, Studen\'y, Roverato and \v{S}t\v{e}p\'
{a}nov\'{a} %
\cite{sturovste},
Leimer \cite{leimer}, and Hara and Takemura \cite{hara}.

Throughout this paper, we consider a simple graph $G=(V(G), E(G))$, $V(G)=N$,
$E(G) \subseteq N \times N \setminus\{(a,a)\dvt  a\in N\}$.
An edge $(a,b) \in E(G)$ is \textit{undirected} if $(b,a) \in E(G)$.
We denote an undirected edge by
$a\uedge b$. If $(b,a) \notin E(G)$, we call $(a,b)$ \textit{directed}
and denote it by $a \to b$.
An \textit{undirected graph} (UG) contains only undirected edges,
while a directed graph
contains only directed ones.
The \textit{underlying graph} of a graph $G$ is the undirected graph
obtained from $G$ by replacing every directed edge with an undirected one.
For a subset $S \subseteq N$, $G_S$ denotes the subgraph of $G$ induced
by $S$.
In this paper when we refer to a subgraph of $G$, it is induced by some
subset of $N$.
A graph is \textit{complete} if all vertices are joined by an edge.
A subset $K \subseteq N$ is a \textit{clique} if $G_K$ is complete.
In particular, an empty set $K = \emptyset$ is a clique.
A clique $K$ is maximal if no proper superset $K' \supset K$ is a
clique in $G$.
$\mathcal{K}_G$ denotes the set of maximal cliques of $G$.

Two vertices $a,b\in N$ are adjacent if $(a,b)\in E(G)$ or $(b,a) \in E(G)$.
If $a \to b$, then $a$ is a \textit{parent} of $b$ and $b$ is a
\textit{child} of $a$.
For a vertex $c \in N$, we denote the set of parents and the set of
children of $c$ in $G$ by $\pa_G(c)$ and $\ch_G(c)$,
respectively.
For a subset $C \subseteq N$, let $\pa_G(C) = \bigcup_{c \in C} \pa
_G(c) \setminus C$ and
$\ch_G(C) = \bigcup_{c \in C} \ch_G(c) \setminus C$. %
We will omit the subscript $G$ if it is obvious from the context.

A \textit{path} of length $k \ge0$ from $a$ to $b$ is a sequence $a =
c_1, \dots, c_{k+1} = b$ of distinct vertices such that $(c_{i},
c_{i+1}) \in E(G)$ for $ i = 1, \dots, k$.
If a path contains only undirected edges, it is an undirected path and
otherwise (i.e., it contains
at least one directed edge) directed.
Note that some authors use the term ``semi-directed'' instead of ``directed''.
A vertex $a \in N$ is an \textit{ancestor} of $b \in N$ if there
exists a path from $a$ to $b$.
Let $\an_G(a)$ be the set of all ancestors of $a$.
The ancestral set $\an_G(C)$ of a subset $C \subseteq N$
is defined as $\an_G(C) = \bigcup_{c \in C} \an_G(c)$.
Note that $C \subseteq\an_G(C)$.
Let $c_1, \dots, c_k$ be a path with $(c_k, c_1) \in E(G)$.
Then we call the sequence $c_1, \dots, c_k, c_1$ a \textit{cycle} of
length $k$. Analogously to paths, a cycle is undirected if it contains
only undirected edges, otherwise directed.
A \textit{directed acyclic graph} (DAG) is a directed graph containing
no directed cycles.

A subset $C \subseteq N$ is said to be \textit{connected} if there
exists an undirected path from $a$ to $b$ for all $a, b \in C$ in the
subgraph $G_C$.
A \textit{connectivity component of $G$} is a maximal connected subset
in $G$ with respect to set inclusion.
The connectivity components in $G$ form a partition of $N$.
A~\textit{chain graph} (CG) $G$ is a graph %
whose connectivity components $C_1, \dots, C_m$ can be ordered such that
if $a \rightarrow b \in E(G)$ with $a \in C_i, b \in C_j$, then $i
< j$.
Equivalently, a chain graph is defined as a graph containing no
directed cycles.
The connectivity components of a chain graph are
called \textit{chain components}. The set of chain components of a
chain graph $G$ is denoted by $\mathcal{C}_G$.
The chain components are most easily found by removing all directed
edges from $G$
before taking connectivity components.
Both undirected graphs and directed acyclic graphs are chain graphs.
In fact, a chain graph is undirected provided $m=1$, and directed
acyclic if each chain component contains only one vertex.
Suppose two chain graphs $G,H$ have the same underlying graph.
Then we say $H$ is \textit{larger} than or equal to $G$
if $a \rightarrow b$ in $H$ implies $a \rightarrow b$ in $G$.
In this case, we write $H \ge G$.
From the definition, $H$ has more undirected edges than $G$ if $H$ is
larger than $G$.

We now discuss maximal prime subgraphs of an undirected graph $G$.
A non-empty subset $\emptyset\neq S \subset N$ is a \textit{separator}
if the set $N\setminus S$ is not connected.
$S = \emptyset$ is a separator if (and only if) $G$ is not connected.
A separator $S$ is a \textit{clique separator} if $S$ is a clique.
For two vertices $u,v \in N$ with $u\uedge v \notin E(G)$,
a set $S$ is called a \textit{$(u,v)$-separator} if $u$ and $v$ belong
to different components of $G_{N \setminus S}$.
A \textit{minimal vertex separator} is a minimal $(u,v)$-separator for
some $u,v \in N$ with respect to set inclusion %
relative to all $(u,v)$-separators. Note that a minimal vertex
separator for some $u,v$ maybe a strict
subset of a minimal vertex separator for another pair.
For $\langle A, B \, | \, C \rangle \in\mathcal{T}(N)$, we say that
$A$ and $B$ are
separated by $C$
if $C$ is $(a,b)$-separator for all $a \in A$ and $b \in B$.

A graph $G$ is \textit{prime} if $G$ has no clique separators.
Let $G_V$, $V \subseteq N$, be prime. Then $G_V$ is a \textit{maximal
prime subgraph} (mp-subgraph)
and $V$ is a \textit{maximal prime component} (mp-component) of $G$,
if %
there is no proper superset $V' \supset V$ such that $G_{V'}$ is prime.
From Lemma~2.1(iii) of \cite{leimer}, if $V_1$ and $V_2$ are distinct
prime components then $G_{V_1 \cap V_2}$ is complete.
The set of mp-components of $G$ is denoted by $\mathcal{V}_G$.
There exists an order $V_1, \dots, V_m, m = |\mathcal{V}_G|$, of
$\mathcal{V}_G$
such that
\begin{eqnarray*}
\forall i \in\{ 2,\dots,m \}, \exists k \in\{ 1, \dots, i-1 \},\qquad  S_i
\equiv V_i \cap\bigcup_{j < i}
V_j \subseteq V_k. %
\end{eqnarray*}
This sequence is said to be \textit{D-ordered}, or alternatively, to
have a \textit{running intersection property} (RIP) \cite{lauritzen}.
For each $i$, $S_i$ is a clique minimal vertex separator.
An important fact about RIP is that for each $i$, $\bigcup_{j < i} V_j
\setminus S_i$ and $V_i \setminus S_i$ are separated by $S_i$
in $H_{V_1 \cup\cdots\cup V_i}$ by applying Corollary~2.7(i) of \cite
{leimer} recursively.
Define $\mathcal{S}_G = \{ S_2, \dots, S_m \}$.
Then $\mathcal{S}_G$ is the set of all clique minimal vertex
separators in $G$.
Moreover, the number of $S \in\mathcal{S}_G$ which appears among
$S_2, \dots, S_m$ may be more than one.
This number is called the \textit{multiplicity} of $S$ in $G$, and
written as $\nu_G(S)$.
For any undirected graph $G$, $\mathcal{V}_G, \mathcal{S}_G$ and
$\{ \nu_G(S) \}_{S \in\mathcal{S}_G}$ are uniquely defined \cite{leimer}.

In graphical models, the class of models induced by \textit
{decomposable graphs} are well studied,
because it has many good properties.
There are several equivalent definitions of decomposable graphs.
One of them is based on the decomposability of graphs.
For an undirected graph $G$ and a triplet $\langle A, B \, | \, C
\rangle$ with $N=A
\cup B \cup C$,
we say that \textit{$\langle A, B \, | \, C \rangle$ decomposes $G$
into the subgraphs
$G_{AC}$ and $G_{BC}$}
if $C$ is a clique and separates $A$ and $B$.
The decomposition is proper if $A, B \neq \emptyset$.
An undirected graph $G$ is decomposable if it is complete or there
exists $\langle A, B \, | \, C \rangle$
which properly decomposes $G$ into decomposable subgraphs $G_{AC}$ and $G_{BC}$.
Decomposable graphs are characterized in terms of mp-subgraphs by
Leimer \cite{leimer}.
An undirected graph $G$ is decomposable if and only if all
mp-components of $G$ are cliques.
Furthermore, for every undirected graph $G$ with
mp-components $V_1, \dots, V_m \in\mathcal{V}_G$,
there exists a decomposable graph $G'$ such that $V_1, \dots, V_m$ are
maximal cliques of $G'$.
The graph $G'$ is obtained by adding edges in such a way that $V_1,
\dots, V_m$ are cliques.

Another equivalent definition is a \textit{chordal graph}, or
alternatively \textit{triangulated graph}.
An undirected graph is chordal if every cycle of length more than or
equal to four has a chord, that is, an edge between two non-consecutive
vertices of the cycle.
An undirected graph is chordal if and only of it is decomposable \cite
{lauritzen}.

\subsection{Conditional independence models induced by graphs}
Here we summarize known facts on conditional independence models
induced by graphs.

For directed acyclic graphs,
there are two equivalent separation criteria \textit{d-separation}
\cite{pearl,verma} and \textit{moralization} \cite{laudawlarlei}.
However we omit their details because we do not need them in this paper.
For a triplet $\langle A, B \, | \, C \rangle \in\mathcal{T}(N)$, we
write $\indtri
{A}{B}{C}{G}$ if
$A$ and $B$ are separated given $C$ by these criteria.
Every directed acyclic graph $G$ induces the formal independence model
%
\begin{eqnarray}\label{eq:mg}
\mathcal{M}_G = \bigl\{ \langle A, B \, | \, C \rangle \in
\mathcal{T}(N) \dvt  \indtri {A} {B} {C} {G} \bigr\},
\end{eqnarray}
which we call a \textit{DAG model}.
A probability measure $\PP$ over $N$ is \textit{Markovian} with
respect to a directed acyclic graph $G$
if $\mathcal{M}_G \subseteq\mathcal{M}_{\PP}$ and \textit
{perfectly Markovian} if the converse inclusion also holds.

For an undirected graph $G$ and $\langle A, B \, | \, C \rangle \in
\mathcal{T}(N)$,
we have $\indtri{A}{B}{C}{G}$
if $A$ and $B$ are separated by $C$ in $G$ \cite{lauritzen,pearl}. %
An \textit{UG model} $\mathcal{M}_G$ is again defined by \eqref{eq:mg}.
The definitions of a Markovian and a perfectly Markovian measure are analogous
to the case of DAG models.
It is known that a perfectly Markovian discrete measure exists for
every undirected graph \cite{geiger1993}.
A~decomposable model is defined as an independence model induced by a
decomposable graph.
A~decomposable model is simultaneously an UG model and a DAG model.

Finally, we discuss chain graphs.
A popular separation criterion for chain graphs is \textit
{moralization} \cite{frydenberg}.
For a chain graph $G$ and a triplet $\langle A, B \, | \, C \rangle
\in\mathcal
{T}(N)$, let $H = G_{\an(ABC)}$.
A \textit{moral graph} $H^{\mor}$ of $H$ is the undirected graph
obtained by adding an undirected edge $a\uedge b$ to the underlying
graph of $H$
whenever there is a chain component $C' \in\mathcal{C}_H$ such that
$a,b \in\pa(C')$ and $a$ and $b$ are not adjacent in $H$.
We define $\indtri{A}{B}{C}{G}$
if $\indtri{A}{B}{C}{H^{\mor}}$ holds.
The definitions of a \textit{CG model}, a Markovian measure and a
perfectly Markovian measure are analogous to the other graphs.
It is known that a perfectly Markovian discrete measure exists for
every chain graph \cite{stubou}.

An important concept about chain graphs is the equivalence for graphs
\cite{stu2005}.
We say that $G$ and $H$ are \textit{equivalent} if $\mathcal{M}_G =
\mathcal{M}_H$.
Equivalent chain graphs are characterized by Frydenberg \cite{frydenberg}.
A \textit{complex} in $G$ is a subgraph of $G$ of the form $c_0 \to
c_1\uedge\cdots\uedge c_k \leftarrow c_{k+1}, k \ge1$,
and no other edges between $c_0, c_1, \dots, c_{k+1}$ exist in $G$.
%
\begin{them}[(Frydenberg \cite{frydenberg})] \label{thm:cg_equivalent}
Two chain graphs are %
equivalent if and only if their underlying graphs coincide
and they have the same complexes.
\end{them}
A more important fact is that every %
equivalence class has one distinguished representative.
%
\begin{them}[(Frydenberg \cite{frydenberg})] \label{thm:frydenberg}
Every %
equivalence class $\mathcal{H}$ of chain graphs %
has the largest element $H_{\infty} \in\mathcal{H}$ such that $H \le
H_{\infty}$
for all $H \in\mathcal{H}$.
\end{them}

\subsection{Standard imsets for directed acyclic graphs and
decomposable graphs}
\label{sec:standard}
Let $G$ be a directed acyclic graph.
A \textit{standard imset for $G$} is defined as follows \cite{stu2005}:\vspace*{1pt}
%
\begin{eqnarray}\label{eq:dagim}
u_G = \delta_N - \delta_\emptyset+ \sum
_{i \in N} \{ \delta_{\pa
(i)} - \delta_{\{ i \} \cup\pa(i)} \}.
\end{eqnarray}
This standard imset %
is a unique representative for equivalent graphs.
%
\begin{lem}[(Studen\'y \cite{stu2005})] \label{lem:dagim}
Let $G$ be a directed acyclic graph. %
Then $u_G \in\mathcal{C}(N)$ and $\mathcal{M}_G = \mathcal
{M}_{u_G}$ hold.
Moreover, for a directed acyclic graph $G'$, $\mathcal{M}_{G} =
\mathcal{M}_{G'}$ if and only if $u_{G} = u_{G'} $.
\end{lem}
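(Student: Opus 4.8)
The plan is to establish the three assertions in turn, spending most of the effort on the equality $\mathcal{M}_G=\mathcal{M}_{u_G}$. First, to see $u_G\in\mathcal{C}(N)$, I would fix a topological ordering $v_1,\dots,v_n$ of $N$ ($n=|N|$) with $\pa(v_i)\subseteq\{v_1,\dots,v_{i-1}\}$ and put $D_i=\{v_1,\dots,v_{i-1}\}\setminus\pa(v_i)$. A direct telescoping of the identifiers $\delta_{\{v_1,\dots,v_i\}}$ gives
\[ u_G=\sum_{i=1}^{n} u_{\tri{v_i}{D_i}{\pa(v_i)}}, \]
so that $u_G$ is a sum of semi-elementary imsets whose first component is a singleton. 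A second telescoping, writing $D_i=\{b_1,\dots,b_q\}$ and peeling off one $b_k$ at a time, rewrites each such semi-elementary imset as a non-negative integer sum of the elementary imsets $u_{\tri{v_i}{b_k}{\pa(v_i)\cup\{b_1,\dots,b_{k-1}\}}}$. Hence $u_G\in\mathcal{C}(N)\subseteq\mathcal{S}(N)$.

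For $\mathcal{M}_G\subseteq\mathcal{M}_{u_G}$ I would first note that each ordered local statement $\tri{v_i}{D_i}{\pa(v_i)}$ lies in $\mathcal{M}_{u_G}$: by the decomposition above, $u_G-u_{\tri{v_i}{D_i}{\pa(v_i)}}=\sum_{j\neq i}u_{\tri{v_j}{D_j}{\pa(v_j)}}$ is again a non-negative combination of elementary imsets, hence in $\mathcal{S}(N)$, so the defining condition for $\indtri{v_i}{D_i}{\pa(v_i)}{u_G}$ holds with $k=1$. Since $\mathcal{M}_{u_G}$ is known to be a semigraphoid \cite{stu2005}, and since $\mathcal{M}_G$ is the semigraphoid closure of these ordered local statements (the classical equivalence of the ordered local and global Markov properties for a directed acyclic graph \cite{lauritzen}), the inclusion follows.

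The reverse inclusion $\mathcal{M}_{u_G}\subseteq\mathcal{M}_G$ is the main obstacle. I would take a discrete distribution $\PP$ that is perfectly Markovian with respect to $G$ (such a measure exists since $G$ is in particular a chain graph) so that $\mathcal{M}_{\PP}=\mathcal{M}_G$, and work with its multiinformation function $m_{\PP}$, a standardized supermodular function for which $\langle m_{\PP},u_{\tri{A}{B}{C}}\rangle=-I(A;B\,|\,C)$ vanishes exactly on the conditional independences of $\PP$. Because $\PP$ is Markovian, every local statement is a genuine conditional independence, so each summand of $\langle m_{\PP},u_G\rangle$ vanishes and $\langle m_{\PP},u_G\rangle=0$. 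The delicate point, and the crux of the whole proof, is that the one-sided inequality $\langle m_{\PP},\cdot\rangle\ge 0$ on $\mathcal{S}(N)$ is by itself too weak to conclude anything; what is needed is the face-lattice correspondence underlying Lemma \ref{lem:include} (from \cite{stu1994}), by which $\tri{A}{B}{C}\in\mathcal{M}_{u_G}$ forces $\langle m,u_{\tri{A}{B}{C}}\rangle=0$ for \emph{every} supermodular $m$ that is tight on $u_G$. Applying this with $m=m_{\PP}$ gives $I(A;B\,|\,C)=0$ under $\PP$, i.e.\ $\tri{A}{B}{C}\in\mathcal{M}_{\PP}=\mathcal{M}_G$. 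Turning this tightness duality into a rigorous argument, and keeping track of the integrality of $k$, is where the real work lies.

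Finally, for $\mathcal{M}_G=\mathcal{M}_{G'}\iff u_G=u_{G'}$, the direction $(\Leftarrow)$ is immediate from the first two parts: $u_G=u_{G'}$ yields $\mathcal{M}_G=\mathcal{M}_{u_G}=\mathcal{M}_{u_{G'}}=\mathcal{M}_{G'}$. For $(\Rightarrow)$ I would use that $\mathcal{M}_G=\mathcal{M}_{G'}$ means $G$ and $G'$ are Markov equivalent, equivalently (the directed-acyclic case of Theorem \ref{thm:cg_equivalent}) that they share the same underlying graph and the same complexes. By the standard fact that two Markov-equivalent directed acyclic graphs are joined by a sequence of covered-edge reversals, it then suffices to check invariance of $u_G$ under reversing a single covered edge $a\to b$ (one with $\pa(a)=\pa(b)\setminus\{a\}$). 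Writing $W=\pa(a)$, the two terms of $u_G$ attached to $a$ and $b$ sum to $\delta_W-\delta_{\{a,b\}\cup W}$ both before and after the reversal, while all remaining terms are unchanged; hence $u_G=u_{G'}$. I expect this last computation to be routine once the covered-edge reduction is invoked.
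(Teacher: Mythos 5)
The paper does not prove this lemma at all: it is imported verbatim from Studen\'y \cite{stu2005} (and, for the covered-edge part, the literature on Markov equivalence), so there is no in-paper argument to compare against. Your write-up is essentially the textbook proof and its overall structure is sound: the telescoping identity $u_G=\sum_i u_{\tri{v_i}{D_i}{\pa(v_i)}}$ along a topological order is correct and does give combinatoriality via the standard peeling of semi-elementary imsets; the forward inclusion via the semigraphoid property of $\mathcal{M}_{u_G}$ plus the Verma--Pearl completeness of d-separation over the causal input list is legitimate (Studen\'y instead shows directly that $u_G-u_{\tri{A}{B}{C}}$ is combinatorial for $\tri{A}{B}{C}\in\mathcal{M}_G$, which is what \reflem{criteria} records, but your route is a valid alternative); and the covered-edge-reversal computation for uniqueness is exactly the standard one and checks out.

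Two local corrections to the third step. First, the sign: with the usual multiinformation function one has $\langle m_{\PP},u_{\tri{A}{B}{C}}\rangle=+I(A;B\,|\,C)\ge 0$, not $-I(A;B\,|\,C)$; the nonnegativity of this pairing on all of $\cone(\mathcal{E}(N))$ is precisely what you need, so the sign matters and should be fixed. Second, you substantially overstate the difficulty there: no ``tightness duality'' or face-lattice machinery is required. From $k\,u_G-u_{\tri{A}{B}{C}}\in\mathcal{S}(N)$ and supermodularity of $m_{\PP}$ you get $0\le\langle m_{\PP},u_{\tri{A}{B}{C}}\rangle\le k\,\langle m_{\PP},u_G\rangle=0$ directly, since each summand $I(v_i;D_i\,|\,\pa(v_i))$ vanishes for a Markovian $\PP$; this sandwich already yields $I(A;B\,|\,C)=0$ and hence $\tri{A}{B}{C}\in\mathcal{M}_{\PP}=\mathcal{M}_G$. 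So the step you flag as ``where the real work lies'' is in fact a two-line inequality, and with the sign corrected your proof is complete modulo the cited external theorems (existence of a perfectly Markovian discrete measure, semigraphoid closure characterization of d-separation, and Chickering's covered-edge connectivity of Markov equivalence classes).
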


A standard imset for a decomposable graph $H$ is defined by the sets of
maximal cliques and clique minimal vertex separators in $H$ \cite{stu2005}:
%
\begin{eqnarray}\label{eq:dgim}
u_H = \delta_N - \sum_{K \in\mathcal{K}_H}
\delta_{K} + \sum_{S
\in\mathcal{S}_H}
\nu_H(S) \cdot\delta_S.
\end{eqnarray}

%
\begin{figure}[b]

\includegraphics{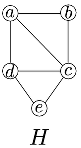}

\caption{A decomposable graph $H$.}
\label{fig:chordal}
\end{figure}

%
\begin{exmp}
Put $N = \{ a,b,c,d,e \}$ and consider the decomposable graph $H$
shown in Figure~\ref{fig:chordal}.
The sets of maximal cliques and clique minimal vertex separators in $H$
are $\mathcal{K}_H = \{ abc, acd, cde \}$
and $\mathcal{S}_H = \{ ac, cd \}$ (with multiplicities $\nu_H(ac)
= \nu_H(cd) = 1$).
Then the standard imset for $H$ is
\begin{eqnarray*}
u_H &=& \delta_{abcde} - \delta_{abc} -
\delta_{acd} -\delta_{cde} + \delta_{ac} +
\delta_{cd}
\\
&=& u_{\langle b, e \, | \, acd \rangle} + u_{\langle a, e \, | \, cd
\rangle} + u_{\langle b, d \, | \, ac \rangle}.
\end{eqnarray*}
For a complete graph, its standard imset is the zero imset.
\end{exmp}

Since decomposable models can be viewed as DAG models, their imsets
\eqref{eq:dagim} and \eqref{eq:dgim} lead to the same imset.
%
\begin{lem}[(Studen\'y \cite{stu2005})]
For every decomposable graph $H$, there exists a directed acyclic graph $G$
such that $\mathcal{M}_G = \mathcal{M}_H$ and $u_G = u_H$.
\end{lem}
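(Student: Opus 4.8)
The plan is to exhibit one specific directed acyclic graph $G$, obtained by orienting the edges of $H$ along a vertex ordering compatible with the running intersection property, and then to verify the two required equalities separately: $\mathcal{M}_G=\mathcal{M}_H$ by a moralization argument, and $u_G=u_H$ by a telescoping computation starting from \eqref{eq:dagim}.

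First I would fix a D-ordering $K_1,\dots,K_m$ of the maximal cliques of $H$ (which, since $H$ is decomposable, coincide with its mp-components), and set $S_i=K_i\cap\bigcup_{j<i}K_j$ and $R_i=K_i\setminus\bigcup_{j<i}K_j$, so that $R_1,\dots,R_m$ partition $N$ and $\{S_2,\dots,S_m\}$ realizes $\mathcal{S}_H$ with the multiplicities $\nu_H$. I would then list the vertices of $N$ by processing the blocks $R_1,R_2,\dots,R_m$ in this order (the order inside each block being arbitrary), orient every edge of $H$ from its earlier to its later endpoint, and call the resulting graph $G$. By construction $G$ is acyclic and its underlying graph is $H$.

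The combinatorial core is the claim that for $v\in R_i$ one has $\pa_G(v)=S_i\cup\{w\in R_i: w \text{ precedes } v\}$; in particular $\pa_G(v)$ and $\{v\}\cup\pa_G(v)$ are cliques of $H$. The inclusion $\supseteq$ is immediate because $K_i$ is a clique all of whose vertices from earlier blocks lie in $S_i$. The inclusion $\subseteq$ — that no earlier neighbour of $v$ leaks outside $K_i$ — is the step I expect to be the main obstacle, and I would establish it by a descent on the D-ordering: if $u$ precedes $v$, is adjacent to $v$, and lies with $v$ in a common maximal clique $K_l$, then $v\in R_i$ forces $l\ge i$, and for $l>i$ both $u,v$ lie in $S_l$, which the running intersection property places inside a single earlier clique $K_k$ with $k\ge i$ (the bound $k\ge i$ because $v\in K_k$ together with $v\in R_i$ forbids $k<i$); iterating strictly decreases $l$ while staying $\ge i$, so one reaches $k=i$, whence $u\in K_i$ and therefore $u\in S_i$.

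Granting this, all parent sets of $G$ are complete, so $G$ contains no immoral configuration and its moral graphs over ancestral sets agree with the corresponding induced subgraphs of $H$; hence the two separation criteria coincide and $\mathcal{M}_G=\mathcal{M}_H$, confirming the general fact that a decomposable model is simultaneously a UG and a DAG model. It then remains to compute $u_G$. Grouping the sum in \eqref{eq:dagim} according to the block $R_i$ containing each vertex and writing $R_i=\{w_1,\dots,w_r\}$ in the chosen order, the parent formula gives $\pa_G(w_t)=S_i\cup\{w_1,\dots,w_{t-1}\}$ and $\{w_t\}\cup\pa_G(w_t)=S_i\cup\{w_1,\dots,w_t\}$, so the block's contribution $\sum_{t}(\delta_{\pa_G(w_t)}-\delta_{\{w_t\}\cup\pa_G(w_t)})$ telescopes to $\delta_{S_i}-\delta_{K_i}$ (with the convention $S_1=\emptyset$). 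Summing over $i$ and reinstating $\delta_N-\delta_\emptyset$ collapses \eqref{eq:dagim} to $\delta_N-\sum_{i}\delta_{K_i}+\sum_{i\ge 2}\delta_{S_i}$, which is exactly \eqref{eq:dgim} once $\sum_{i\ge 2}\delta_{S_i}$ is rewritten as $\sum_{S\in\mathcal{S}_H}\nu_H(S)\,\delta_S$ by the definition of multiplicity. This yields $u_G=u_H$ and completes the argument.
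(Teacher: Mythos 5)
Your construction is correct, but note that the paper does not prove this lemma at all: it is imported verbatim from Studen\'y \cite{stu2005}, so there is no in-paper argument to compare against. What you supply is essentially the standard proof from the literature: orient $H$ along a vertex ordering obtained by concatenating the residuals $R_1,\dots,R_m$ of a D-ordered clique sequence, so that $G$ becomes a \emph{perfect} directed version of $H$ (all parent sets complete). Your two key steps check out. The descent argument for $\pa_G(v)\subseteq K_i$ is sound: if $u,v$ share a maximal clique $K_l$ with $l>i$, then $u,v\in S_l$, the running intersection property pushes them into some $K_k$ with $i\le k<l$ (the lower bound because $v\in R_i$), and the index strictly decreases until it hits $i$. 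The telescoping of $\sum_t(\delta_{\pa_G(w_t)}-\delta_{\{w_t\}\cup\pa_G(w_t)})$ to $\delta_{S_i}-\delta_{K_i}$ then collapses \eqref{eq:dagim} to \eqref{eq:dgim} exactly, with the multiplicities $\nu_H(S)$ accounted for by how often each separator recurs among $S_2,\dots,S_m$, which matches the paper's definition of multiplicity. The only place you lean on an unproved step is the inference from ``all parent sets are complete'' to $\mathcal{M}_G=\mathcal{M}_H$: knowing that $(G_{\an(ABC)})^{\mor}=H_{\an(ABC)}$ gives separation in an ancestral \emph{induced subgraph}, whereas the undirected criterion is separation in all of $H$, and bridging these requires the standard collider-shortcutting argument (a $C$-avoiding path in $H$ can be replaced by one lying in $\an_G(ABC)$ because colliders have complete parent sets). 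Since the paper itself records ``a decomposable model is simultaneously a UG model and a DAG model'' as a known fact, this gloss is acceptable, but it is the one spot where your write-up silently invokes a theorem rather than proving it.
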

This implies that for a decomposable graph $H$, we have
$u_H \in\mathcal{C}(N)$ and $\mathcal{M}_H = \mathcal{M}_{u_H}$
from Lemma~\ref{lem:dagim}.

As discussed in Section~\ref{sec:introduction}, these imsets for directed
acyclic and decomposable graphs are not the only combinatorial ones
representing their graphical models.
However they are the simplest, ``standard'' representations \cite{bou2010}.
A standard imset gives a simpler criterion of testing a conditional
independence statement than other imsets.
%
\begin{lem}[(Bouckaert \textit{et al.} \cite{bou2010})] \label{lem:criteria}
For a directed acyclic (resp. decomposable) graph $G$, $\langle A, B \,
| \, C \rangle
\in\mathcal{M}_G$ if and only if $u_G - u_{\langle A, B \, | \, C
\rangle} \in
\mathcal{C}(N)$,
which is also equivalent to $u_G - u_{\langle A, B \, | \, C \rangle}
\in\mathcal
{S}(N)$, where $u_G$ is the standard imset in \eqref{eq:dagim} or
\eqref{eq:dgim}.
\end{lem}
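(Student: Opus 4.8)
The plan is to notice that all but one of the stated equivalences are formal, and then to concentrate on the single substantive implication. Since $\mathcal{C}(N)\subseteq\mathcal{S}(N)$, the condition $u_G - u_{\tri{A}{B}{C}}\in\mathcal{C}(N)$ at once gives $u_G - u_{\tri{A}{B}{C}}\in\mathcal{S}(N)$. Conversely, if $u_G - u_{\tri{A}{B}{C}}\in\mathcal{S}(N)$, then taking $k=1$ in the definition of $\indtri{A}{B}{C}{u_G}$ (legitimate because $u_G\in\mathcal{C}(N)\subseteq\mathcal{S}(N)$ by \reflem{dagim}) yields $\tri{A}{B}{C}\in\mathcal{M}_{u_G}=\mathcal{M}_G$, again by \reflem{dagim}. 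Thus membership $\tri{A}{B}{C}\in\mathcal{M}_G$ already follows from the weakest of the three conditions. Moreover, the definition together with \reflem{dagim} shows that $\tri{A}{B}{C}\in\mathcal{M}_G$ is equivalent to the existence of \emph{some} $k\in\NN$ with $k\cdot u_G - u_{\tri{A}{B}{C}}\in\mathcal{S}(N)$. Hence it remains only to prove the strongest condition, namely $\tri{A}{B}{C}\in\mathcal{M}_G\implies u_G - u_{\tri{A}{B}{C}}\in\mathcal{C}(N)$; the entire force of the lemma is to improve $\exists k$ to $k=1$ and to replace $\mathcal{S}(N)$ by $\mathcal{C}(N)$.

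I would first reduce to the directed acyclic case: since a decomposable graph induces the same model and the same standard imset as some directed acyclic graph (the remark following \eqref{eq:dgim}), the decomposable statement follows from the DAG statement, so assume $G$ is a DAG with $u_G$ given by \eqref{eq:dagim}. The structural fact I would exploit is that, after fixing a topological order of $G$, the imset \eqref{eq:dagim} rewrites as a telescoping sum of semi-elementary imsets $u_G=\sum_{b\in N}u_{\tri{b}{P_b}{\pa(b)}}$, where $P_b$ is the set of predecessors of $b$ in the order that are not parents of $b$. Expanding each factor one element of $P_b$ at a time exhibits $u_G$ as a nonnegative sum $\sum_{e\in\mathcal{E}_G}u_e$ of elementary imsets indexed by pairs (node, non-parent predecessor), each corresponding to a local Markov independence of $G$ and hence lying in $\mathcal{M}_G$.

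For the substantive implication I would decompose $u_{\tri{A}{B}{C}}$ in the same elementary fashion, peeling $A$ and $B$ off one vertex at a time. The identity $u_{\tri{A}{Bb}{C}}=u_{\tri{A}{B}{C}}+u_{\tri{A}{b}{CB}}$ (a routine cancellation of identifiers) together with its symmetric counterpart in $A$ writes $u_{\tri{A}{B}{C}}$ as a sum of $|A|\cdot|B|$ elementary imsets, and since $\mathcal{M}_G$ satisfies the semi-graphoid axioms, the decomposition and weak-union properties guarantee that each of these elementary triplets again belongs to $\mathcal{M}_G$. The goal is then to choose the topological order of $G$ and the peeling orders of $A$ and $B$ so that the resulting collection $\mathcal{E}'$ of elementary imsets is a sub-collection of $\mathcal{E}_G$. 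Once this matching is achieved, $u_G - u_{\tri{A}{B}{C}}=\sum_{e\in\mathcal{E}_G\setminus\mathcal{E}'}u_e$ is visibly a nonnegative integer combination of elementary imsets, i.e.\ an element of $\mathcal{C}(N)$, as required.

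The main obstacle is precisely this matching step, and it is where the separation hypothesis $\tri{A}{B}{C}\in\mathcal{M}_G$ does its work. I would argue that, because $C$ separates $A$ from $B$ in the relevant moral graph, one can linearly order $N$ so that $C$ precedes $A\cup B$, the blocks $A$ and $B$ are arranged consistently with the order, and for each pair $(a,b)$ with $a\in A$, $b\in B$ the conditioning set of the corresponding elementary piece of $u_{\tri{A}{B}{C}}$ agrees with one occurring in $\mathcal{E}_G$ at the same pair; the absence of the edge between the two endpoints, forced by the independence, is what makes each such pair a genuine (node, non-parent predecessor) index of $\mathcal{E}_G$. Verifying that a \emph{single} global order realizes all these coincidences simultaneously is the delicate combinatorial point. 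The formal $\exists k$, $\mathcal{S}(N)$ statement available from \reflem{include} supplies no such order and is therefore insufficient, which is exactly why the explicit elementary bookkeeping cannot be bypassed.
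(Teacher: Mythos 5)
This lemma is quoted in the paper without proof---it is imported from Bouckaert et al.\ \cite{bou2010} (see also Lemma 7.1 in \cite{stu2005})---so there is no in-paper argument to compare against; I can only assess your attempt on its own terms. Your preliminary reductions are all correct and cleanly organized: $\mathcal{C}(N)\subseteq\mathcal{S}(N)$ gives (ii)$\Rightarrow$(iii); taking $k=1$ in the definition of $\mathcal{M}_{u_G}$ together with \reflem{dagim} gives (iii)$\Rightarrow$(i); the decomposable case reduces to the DAG case by the lemma following \eqref{eq:dgim}; the telescoping identity $u_G=\sum_b u_{\tri{b}{P_b}{\pa(b)}}$ along a topological order and the peeling identity $u_{\tri{A}{Bb}{C}}=u_{\tri{A}{B}{C}}+u_{\tri{A}{b}{BC}}$ are both standard and correctly stated. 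You have also correctly isolated where all the content lives: (i)$\Rightarrow$(ii).

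But that implication is exactly what you do not prove. Your strategy requires every elementary summand of $u_{\tri{A}{B}{C}}$ to appear \emph{verbatim} (same pair, same conditioning set) in a single chosen elementary decomposition of $u_G$, and you concede that ``verifying that a single global order realizes all these coincidences simultaneously is the delicate combinatorial point'' without verifying it; that concession is the whole lemma. Moreover, the mechanism you propose for the matching is not available in general: you cannot always ``linearly order $N$ so that $C$ precedes $A\cup B$'' with a topological order of $G$, because $C$ may contain descendants of $A\cup B$ while the separation still holds (e.g.\ $b\to c$ with $a$ isolated and $\tri{a}{b}{c}$); in such cases a match, if it exists, must be produced by a careful choice of the peeling order inside each $P_b$ rather than by positioning $C$ first, and nothing in your sketch shows that the required per-pair choices can be made globally consistent when $|A|,|B|>1$. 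The known proofs do not proceed by this direct monomial matching but by induction on the triplet using graphoid properties and the behaviour of $u_G$ under restriction to ancestral sets and removal of barren vertices. As it stands, then, the proposal is a correct framing plus an unproven (and, in its stated form, partly incorrect) core step, so it does not constitute a proof.
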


\section{Standard imsets for general undirected graphs}
\label{sec:UG}
In this section, we derive imsets for general undirected graphs. %
Our construction is based on a concept of a triangulation.

\subsection{General undirected graphical models}
For generalizing the result of decomposable graphs to general
undirected graphs,
consider constructing a decomposable graph from a given undirected
graph by adding edges.
The resulting graph is called a \textit{triangulation} of the input
graph \cite{heggernes}.
A triangulation $G'$ of $G$ is minimal if there is no triangulation
$G''$ of $G$
such that $E(G'') \subset E(G')$.
From Lemma~2.21 of \cite{lauritzen}, it follows that $G'$ is a minimal
triangulation of $G$ if and only if
removing any edge in $E(G')\setminus E(G)$ from $G'$ makes the
resulting graph non-decomposable.
In general, there are many minimal triangulations of a graph.
In the following, we denote the set of
all minimal triangulations of $G$ by $\mathfrak{T}(G)$.
As for separations of an input graph and a minimal triangulation, the
following lemma holds.
%
\begin{lem} \label{lem:ce}
For every undirected graph $H$ and a triplet $\langle A, B \, | \, C
\rangle \in
\mathcal{M}_H$,
there exists a minimal triangulation $H'$ of $H$ such that $\langle A,
B \, | \, C \rangle \in\mathcal{M}_{H'}$.
\end{lem}
\begin{pf}
It suffices to show the existence of a triangulation $H'$ such that
$\langle A, B \, | \, C \rangle \in\mathcal{M}_{H'}$.
In fact, if $H'$ is not minimal, we can obtain a minimal triangulation
by removing edges from $H'$, because removing edges does not destroy
the relation $A \indep B \,|\, C$.

We construct a desired triangulation as follows (see Figure~\ref{fig:ug_construction}).
Let $N = A' \cup B' \cup C'$ be a partition of the vertex set such that
\begin{eqnarray*}
A' &=& \{ i \in N \dvt  i \mbox{ is connected with } A \mbox{ in }
H_{N
\setminus C} \},
\\
C' &=& C \quad \mbox{and} \quad B' = N \setminus
A'C'.
\end{eqnarray*}
Construct the graph $H'$ by adding edges so that $H'_{A'C'}$ and
$H'_{B'C'}$ are complete.
This $H'$ is clearly decomposable, and hence, a triangulation of $H$.
From the construction, $A'$ and $B'$ are not connected to each other in
$H'_{N \setminus C'}$.
Thus $A \subseteq A'$ and $B \subseteq B'$ are not connected to each
other in $H'_{N \setminus C}$,
which means $\langle A, B \, | \, C \rangle \in\mathcal{M}_{H'}$.
\end{pf}
%
\begin{figure}

\includegraphics{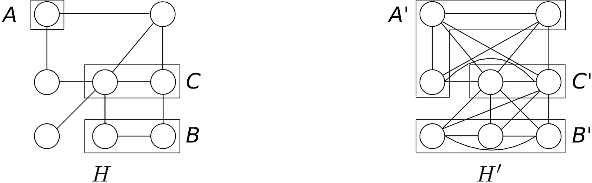}

\caption{A construction of $H'$ from an undirected graph $H$ in
Lemma \protect\ref{lem:ce}.}
\label{fig:ug_construction}
\end{figure}

For a general undirected graph $H$, we can obtain an imset representing
this UG model by using all minimal triangulations.
The following theorem is the first main result of this paper.
%
\begin{them} \label{thm:ug}
Let $H$ be an undirected graph.
Put
%
\begin{eqnarray}\label{eq:ugim1}
v_H = \sum_{H' \in\mathfrak{T}(H)} u_{H'},
\end{eqnarray}
where $\mathfrak{T}(H)$ is the set of minimal triangulations of $H$ and
$u_{H'}$ for $H' \in\mathfrak{T}(H)$ are defined by \eqref{eq:dgim}.
Then $v_H \in\mathcal{C}(N)$ and $\mathcal{M}_H = \mathcal{M}_{v_H}$.
\end{them}
\begin{pf}
Since the class of combinatorial imsets is closed under the addition,
it is evident that the imset $v_H$ is combinatorial.

For every undirected graph $H$,
there exists a discrete probability measure $\PP$ with $\mathcal
{M}_{\PP} = \mathcal{M}_H$ \cite{geiger1993}.
Moreover, Theorem~\ref{thm:main} implies that there is a structural
imset $w
\in\mathcal{S}(N)$ such that $\mathcal{M}_{\PP} = \mathcal{M}_w$.
Then, for $H' \in\mathfrak{T}(H)$, we have
\[
\mathcal{M}_{u_{H'}} = \mathcal{M}_{H'} \subseteq
\mathcal{M}_H = \mathcal{M}_{\PP} = \mathcal{M}_w,
\]
which implies $k_{H'} \cdot w - u_{H'} \in\mathcal{S}(N)$ for some
$k_{H'} \in\NN$ from Lemma~\ref{lem:include}.
Therefore, putting $k = \sum_{H' \in\mathfrak{T}(H)} k_{H'}$, it
follows that $k \cdot w - v_H \in\mathcal{S}(N)$.
That is, $\mathcal{M}_{v_H} \subseteq\mathcal{M}_w = \mathcal{M}_H$.

Conversely, for every $\langle A, B \, | \, C \rangle \in\mathcal
{M}_H$, there exists
$H' \in\mathfrak{T}(H)$ such that $\langle A, B \, | \, C \rangle
\in\mathcal{M}_{H'}$
from Lemma~\ref{lem:ce}. Thus, $u_{H'} - u_{\langle A, B \, | \, C
\rangle} \in\mathcal
{S}(N)$ from Lemma~\ref{lem:criteria}.
Hence, we have
\[
v_H - u_{\langle A, B \, | \, C \rangle} = \sum_{H'' \in\mathfrak
{T}(H) \setminus
H'}
u_{H''} + (u_{H'} - u_{\langle A, B \, | \, C \rangle}) \in \mathcal{S}(N),
\]
which implies $\langle A, B \, | \, C \rangle \in\mathcal{M}_{v_H}$.
\end{pf}

The imset $v_H$ in \eqref{eq:ugim1} is a generalization of the case of
decomposable graphs, because for a decomposable graph $H$, the set of
minimal triangulations contains $H$ only.
An example of this imset is given in the next section.

\subsection{Some consideration toward a definition of standard imsets
for general undirected graphs}
The imset defined in the last section through all minimal
triangulations has `extra' additional parts as shown
in the following example.
%
\begin{exmp} \label{exmp:ug_imset}
Put $N = \{ a,b,c,d,e \}$. Consider the graph $H$ in Figure~\ref{fig:H} and
its minimal triangulations $H_1,H_2$.
%
\begin{figure}

\includegraphics{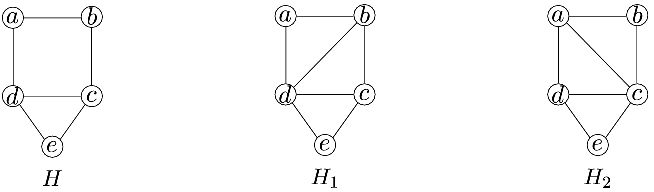}

\caption{Non-decomposable graph $H$ and its minimal triangulations
$H_1, H_2$.}
\label{fig:H}
\end{figure}
Then the imset $v_H$ in \eqref{eq:ugim1} is
\begin{eqnarray*}
v_H &= &u_{H_1} + u_{H_2}
\\
&=& (\delta_N - \delta_{abd} - \delta_{bcd} -
\delta_{cde} + \delta _{bd} + \delta_{cd})
\\
&&{} + (\delta_N - \delta_{abc} - \delta_{acd}-
\delta_{cde} + \delta _{ac} + \delta_{cd})
\\
& =& (u_{\langle ab, e \, | \, cd \rangle} + u_{\langle a, c \, | \, bd
\rangle}) + (u_{\langle ab, e \, | \, cd \rangle} +
u_{\langle b, d
\, | \, ac \rangle})
\\
&= &2 \cdot u_{\langle ab, e \, | \, cd \rangle} + u_{\langle a, c \, |
\, bd \rangle} + u_{\langle b, d \, | \, ac \rangle}.
\end{eqnarray*}
It can be seen that $ab \indep e\, |\, cd$ holds in both $H_1$ and $H_2$.
This is expressed as the coefficient 2 of $u_{\langle ab, e \, | \, cd
\rangle}$.
Now consider an imset $u_H$ with this coefficient 1, that is,
%
\begin{eqnarray}\label{eq:new}
u_H =u_{\langle ab, e \, | \, cd \rangle} + u_{\langle a, c \, | \, bd
\rangle} + u_{\langle b, d \, | \, ac \rangle}.
\end{eqnarray}
From Lemma~\ref{lem:ce}, $\langle A, B \, | \, C \rangle \in\mathcal
{M}_H$ is equivalent to
$\langle A, B \, | \, C \rangle \in\mathcal{M}_{H'}$ for some
minimal triangulation
$H'$ of $H$.
Hence, for example, letting $\langle A, B \, | \, C \rangle \in
\mathcal{M}_{H_1}$, we have
\begin{eqnarray*}
&&u_{H_1} - u_{\langle A, B \, | \, C \rangle} \in\mathcal{S}(N)
\\
&&\quad \Longrightarrow\quad  u_{H_1} + u_{\langle b, d \, | \, ac \rangle} - u_{\langle A, B \, | \, C \rangle} \in
\mathcal{S}(N)
\\
&&\quad \Longleftrightarrow\quad  u_H - u_{\langle A, B \, | \, C \rangle} \in \mathcal{S}(N)
\\
&&\quad \Longrightarrow\quad \langle A, B \, | \, C \rangle \in\mathcal{M}_{u_H}.
\end{eqnarray*}
Since the same result holds for $\langle A, B \, | \, C \rangle \in
\mathcal
{M}_{H_2}$, we have $\mathcal{M}_{v_H} = \mathcal{M}_H \subseteq
\mathcal{M}_{u_H}$.
Also, since $v_H - u_H = u_{\langle ab, e \, | \, cd \rangle} \in
\mathcal{S}(N)$, we
have $\mathcal{M}_{v_H} \supseteq\mathcal{M}_{u_H}$ from Lemma~\ref
{lem:include}.
Thus $\mathcal{M}_{v_H} = \mathcal{M}_{u_H} = \mathcal{M}_H$.
\end{exmp}

Note that a graph such as the one in Figure~\ref{fig:exponential} has an
exponential number of minimal triangulations,
which makes infeasible to calculate $v_H$ in \eqref{eq:ugim1} actually.
%
\begin{figure}

\includegraphics{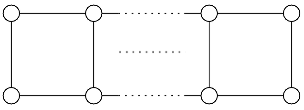}

\caption{A graph with an exponential number of minimal triangulations.}
\label{fig:exponential}
\end{figure}
%
\begin{figure}[b]

\includegraphics{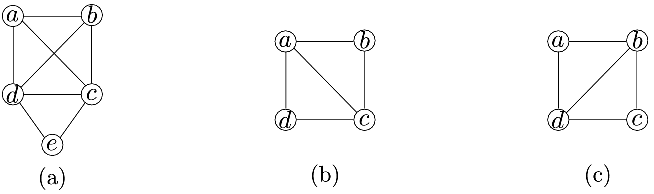}

\caption{Mp-subgraphs in the new imset.}
\label{fig:new_imset}
\end{figure}

The above examples suggest that it suffices to use only minimal
triangulations of each mp-subgraph and
not of the whole of the graph.
In particular, the new imset \eqref{eq:new} in Example~\ref{exmp:ug_imset}
seems to be defined as follows:
First, consider the graph obtained by adding edges to the input graph
in such a way that all mp-subgraphs are complete (Figure~\ref{fig:new_imset}(a)),
and consider its standard imset ($u_{\langle ab, e \, | \, cd \rangle
}$). Next, for
each mp-subgraph which is not complete,
consider their minimal triangulations (Figure~\ref{fig:new_imset}(b), (c))
and their standard imsets ($u_{\langle a, c \, | \, bd \rangle},
u_{\langle b, d \, | \, ac \rangle}$).
We show in the following sections that this idea is correct.

\subsection{Minimal triangulations and mp-subgraphs}
We show in this section that all minimal triangulations for an
undirected graph are obtained
by computing minimal triangulations for each mp-subgraph.

The following facts give the way of adding edges to obtain a minimal
triangulation:
%
\begin{lem}[(Ohtsuki \textit{et al.} \cite{ohtsuki})] \label{lem:ohtsuki}
A triangulation $H'$ of an undirected graph $H$ is minimal if and only if
for each $u\uedge v$ added by this triangulation,
no $(u,v)$-separators of $H$ is a clique in $H'$.
\end{lem}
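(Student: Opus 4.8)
The plan is to reduce minimality to a purely local, edge-by-edge condition and then translate that condition into the language of separators of $H$. Throughout, for a \emph{fill edge} $u\uedge v$ (meaning $u\uedge v\in E(H')\setminus E(H)$) write $T_{uv}=\{\,w\in N : w\uedge u\in E(H')\text{ and } w\uedge v\in E(H')\,\}$ for the set of common neighbours of $u$ and $v$ in $H'$. The first step is an edge-removal criterion: for a fill edge $u\uedge v$, deleting this single edge from $H'$ again yields a triangulation of $H$ if and only if $T_{uv}$ is a clique in $H'$. Since deleting a fill edge keeps $H'\supseteq H$, only chordality is at issue, and I would prove it by examining a \emph{shortest} chordless cycle created by the deletion: as $H'$ is chordal, the only chord that can be lost is $u\uedge v$ itself, so such a cycle must run through both $u$ and $v$, and minimality of its length forces it to be a $4$-cycle $u\uedge x\uedge v\uedge y\uedge u$ with $x,y\in T_{uv}$ and $x\uedge y\notin E(H')$ — exactly a witness that $T_{uv}$ is not a clique. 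Combined with the standard fact that a triangulation is minimal precisely when no single fill edge is removable (see \cite{heggernes}), this gives: $H'$ is minimal if and only if $T_{uv}$ is not a clique in $H'$ for every fill edge $u\uedge v$.

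The second step identifies $T_{uv}$ as a separator of the \emph{original} graph. I would show that for every fill edge $u\uedge v$ the set $T_{uv}$ is a $(u,v)$-separator of $H$ (in fact a minimal one). If it were not, there would be a $u$--$v$ path in $H_{N\setminus T_{uv}}$; taking a shortest such path $u=p_0,\dots,p_k=v$ (hence an induced path in $H'$) and closing it with the edge $u\uedge v$ produces a cycle of length at least $4$ in $H'$ — the cases $k=1$ and $k=2$ are excluded because $u\uedge v\notin E(H)$ and because a single interior vertex would be a common neighbour lying in $T_{uv}$. Triangulating this cycle inside the chordal graph $H'$, the edge $u\uedge v$ must lie in a triangle $u\uedge p_i\uedge v$, so $p_i\in T_{uv}$, contradicting that the path avoids $T_{uv}$.

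Assembling the two steps yields the easy direction of the lemma cleanly. Suppose $H'$ is not minimal. By Step~1 some fill edge $u\uedge v$ has $T_{uv}$ a clique in $H'$, and by Step~2 this $T_{uv}$ is itself a $(u,v)$-separator of $H$; thus that fill edge admits a $(u,v)$-separator of $H$ that is a clique in $H'$. Contrapositively, if no fill edge admits a clique $(u,v)$-separator of $H$, then $H'$ is minimal. For the remaining direction I must start from the hypothesis that \emph{some} $(u,v)$-separator $S$ of $H$ is a clique in $H'$ and conclude that $H'$ is not minimal, which by Step~1 amounts to showing that this very edge $u\uedge v$ is removable, i.e.\ that $T_{uv}$ is a clique in $H'$.

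The main obstacle is precisely this last implication: passing from ``some $(u,v)$-separator $S$ of $H$ is a clique in $H'$'' to ``$T_{uv}$ is a clique in $H'$''. The difficulty is that $S$ is defined through separation in $H$ while $T_{uv}$ is defined through adjacency in $H'$, and the fill edges decouple the two notions, so no direct vertex-chasing argument suffices. I expect to close the gap using the theory of minimal separators of $H$: after shrinking $S$ to a minimal $(u,v)$-separator, both $S$ and $T_{uv}$ are minimal $(u,v)$-separators of $H$, and the characterisation of minimal triangulations as completions of maximal pairwise non-crossing families of minimal separators (see \cite{heggernes}) forces $T_{uv}$ to be completed — hence a clique in $H'$ — once a parallel separator such as $S$ already is. Turning this structural statement into a precise argument, rather than the local chordal-cycle reasoning of Steps~1 and~2, is where the real work lies.
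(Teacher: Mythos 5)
The paper offers no proof of this lemma --- it is quoted from Ohtsuki et al.\ \cite{ohtsuki} --- so your attempt can only be judged on its own terms. Your first direction is essentially sound: the reduction of minimality to single-edge removability, the criterion that a fill edge $u\uedge v$ is removable from the chordal graph $H'$ iff the common neighbourhood $T_{uv}$ is a clique, and the verification that $T_{uv}$ is a $(u,v)$-separator of $H$ (via triangulating the cycle formed by a $T_{uv}$-avoiding path plus the edge $u\uedge v$) all work, modulo the incorrect parenthetical that a shortest path in $H_{N\setminus T_{uv}}$ is induced in $H'$, which your cycle-triangulation argument fortunately does not need.

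The hard direction, however, is not merely unfinished --- the strategy you propose for it is unsalvageable, because the intermediate claim ``some $(u,v)$-separator $S$ of $H$ is a clique in $H'$ $\Rightarrow$ $T_{uv}$ is a clique in $H'$'' is false. Take $N=\{u,v,x,y,s\}$, let $H$ be the tree with edges $u\uedge s,\ s\uedge v,\ u\uedge y,\ v\uedge x$, and let $H'=H+\{u\uedge v,\ u\uedge x,\ v\uedge y\}$ (three triangles sharing the edge $u\uedge v$; this is chordal). Then $\{s\}$ is a $(u,v)$-separator of $H$ and trivially a clique in $H'$, yet $T_{uv}=\{s,x,y\}$ is an independent set, and indeed $H'-u\uedge v$ contains the chordless square $u\uedge x\uedge v\uedge y\uedge u$: the edge $u\uedge v$ is \emph{not} individually removable ($H'$ is non-minimal only because the \emph{other} fill edges are removable). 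The same example kills the claim that $T_{uv}$ is a \emph{minimal} $(u,v)$-separator of $H$, and the Parra--Scheffler correspondence with non-crossing families applies only to minimal triangulations, which is precisely what you cannot assume here. The correct argument for this direction avoids single-edge removal altogether: given a clique $(u,v)$-separator $S$ of $H$ in $H'$, let $C_u$ be the connectivity component of $u$ in $H_{N\setminus S}$ and delete from $H'$ \emph{all} fill edges between $C_u$ and $N\setminus(C_u\cup S)$ (including $u\uedge v$). Since $H$ has no edges between these two sets, the result is the union of the induced subgraphs $H'_{C_u\cup S}$ and $H'_{N\setminus C_u}$ glued along the clique $S$; a clique sum of chordal graphs is chordal, so this is a triangulation of $H$ properly contained in $H'$, whence $H'$ is not minimal.
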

%
\begin{lem}
\label{lem:mp-separation-obvious}
For an ordering $V_1, \dots, V_m$ of mp-components of an undirected
graph $H$ satisfying RIP,
$\bigcup_{k < i} V_k \setminus S_i$ and $V_i \setminus S_i$ are
separated by $S_i$ in the whole graph $H$ for each $i$.
\end{lem}
\begin{pf}
From Corollary~2.7(i) of \cite{leimer},
$\bigcup_{k < i} V_k \setminus S_i$ and $V_i \setminus S_i$ are
separated by $S_i$ in $H_{V_1 \cup\cdots\cup V_i}$ for each $i$.
For $i = m$, the desired conclusion already holds. Thus we show in the
case of $i < m$.
Suppose that there exists $p>i$ such that there exists a path from
some vertex $a \in\bigcup_{k < i} V_k \setminus S_i$ to some vertex
$b \in V_i \setminus S_i$ in $H_{V_1 \cup\cdots\cup V_p \setminus S_i}$.
Let $p > i$ be the minimum number with this property.
Choose a vertex $x \in V_p \setminus S_p$ of the path.
Since $\bigcup_{k < p} V_k \setminus S_p$ and $V_p \setminus S_p$ are
separated by $S_p$ in $H_{V_1 \cup\cdots\cup V_p}$
and $x$ leads to $a, b \in\bigcup_{k < p} V_k$,
the path must contain vertices of $S_p$.
Since $S_p$ is a clique and is contained in $V_q$ for some $1 \le q <
p$ from the definition of RIP,
there also exists a path from $a$ to $b$ in $H_{V_1 \cup\cdots\cup
V_{p-1} \setminus S_i}$.
However this contradicts minimality of $p$, and hence the desired
conclusion holds.
\end{pf}

From these lemmas, we have the following result about the relation
between mp-subgraphs and minimal triangulations of a graph.
%
\begin{lem} \label{lem:tri-mp}
For an undirected graph $H$, a graph $H'$ obtained by a minimal
triangulation of each mp-subgraph is a minimal triangulation of $H$.
Conversely, all minimal triangulations of $H$ are obtained in this way.
\end{lem}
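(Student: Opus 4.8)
The plan is to deduce both directions from Ohtsuki's characterization (\reflem{ohtsuki}) together with two structural facts about the mp-decomposition of $H$: first, that every edge added by \emph{any} triangulation lies inside a single mp-component; second, that for two non-adjacent vertices $u,v$ lying in a common mp-component $V_i$, the minimal $(u,v)$-separators of $H$ are exactly those of the induced subgraph $H_{V_i}$ (in particular they are contained in $V_i$). I would isolate this second statement as a separate lemma, since it is the real engine of the argument. Throughout I use that $H'$ only adds edges, so any set complete in $H$ (e.g.\ each clique minimal vertex separator $S \in \mathcal{S}_H$) stays complete in $H'$, and that each edge of $H$, being a clique, lies inside some mp-component; hence $E(H) = \bigcup_i E(H_{V_i})$.

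For the first (``only if'') direction, let $(H_{V_i})'$ be a minimal triangulation of each $H_{V_i}$ and let $H'$ be their union on $N$. Since the separators $S_i$ in a D-ordering of $\mathcal{V}_H$ are cliques and are left untouched, no edge added inside $V_j$ can fall inside $V_i$ (such an edge would join two vertices of the already-complete clique $V_i \cap V_j \subseteq S$); thus $H'_{V_i} = (H_{V_i})'$ for every $i$. Chordality of $H'$ then follows by induction along the RIP order: peeling off the last component and gluing the chordal pieces $(H_{V_i})'$ along the clique separators $S_i$ repeatedly applies the decomposition definition of a decomposable graph, so $H'$ is decomposable and hence a triangulation of $H$. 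For minimality I would verify Ohtsuki's condition: an added edge $u\uedge v$ lies in some $V_i$; if some $(u,v)$-separator of $H$ were a clique in $H'$, it would contain a minimal $(u,v)$-separator $S_0$ of $H$, which by the key lemma lies in $V_i$ and separates $u,v$ in $H_{V_i}$, and $S_0$ would be a clique in $H'_{V_i} = (H_{V_i})'$ — contradicting minimality of $(H_{V_i})'$ via \reflem{ohtsuki}.

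For the converse, let $H'$ be an arbitrary minimal triangulation of $H$. I first show every added edge $u\uedge v$ lies inside a common mp-component. Otherwise $u$ and $v$ sit in different parts of the RIP tree, so some $S \in \mathcal{S}_H$ separates them in $H$; but $S$ is complete in $H$ and hence in $H'$, giving a $(u,v)$-separator of $H$ that is a clique in $H'$, contradicting \reflem{ohtsuki}. Consequently $H' = \bigcup_i H'_{V_i}$, and each $H'_{V_i}$, being an induced subgraph of a chordal graph, is a triangulation of $H_{V_i}$. To see it is minimal, take an added edge $u\uedge v$ in $H'_{V_i}$ and a minimal $(u,v)$-separator $T_0$ of $H_{V_i}$; by the key lemma $T_0$ is also a $(u,v)$-separator of $H$, so if $T_0$ were a clique in $H'_{V_i}$ it would be a clique in $H'$, again contradicting \reflem{ohtsuki} for $H'$. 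Hence each $H'_{V_i}$ is a minimal triangulation, which is the desired form.

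The main obstacle is the key lemma on locality of minimal separators, especially its ``reverse'' half: that a minimal $(u,v)$-separator of $H_{V_i}$ really separates $u$ and $v$ in the whole graph $H$. I would prove this by a shortcut argument along the mp-decomposition: any $u$--$v$ walk in $H$ that leaves $V_i$ must re-enter through one of the clique separators $S_j = V_i \cap V_j$ bounding $V_i$, and since $S_j$ is complete, each excursion outside $V_i$ between two vertices of $S_j$ can be replaced by a single edge inside $V_i$, producing a $u$--$v$ walk within $H_{V_i}$ that must meet the separator, forcing the original walk to meet it too. The ``forward'' half (that a minimal $(u,v)$-separator $S$ of $H$ lies in $V_i$) then follows by combining this with minimality, since any minimal $H_{V_i}$-separator contained in $S \cap V_i$ would already separate $u,v$ in $H$ and hence equal $S$. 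Both halves rest on the uniqueness and clique-separator structure of the mp-decomposition established by Leimer \cite{leimer}.
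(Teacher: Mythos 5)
Your argument is correct and rests on the same two pillars as the paper's proof --- Ohtsuki's characterization (\reflem{ohtsuki}) and the fact that clique minimal vertex separators of $H$ stay cliques in any graph obtained by adding edges --- but it is organized differently and is more complete in the two places the paper treats tersely. The paper proves chordality of the glued graph by a direct case analysis on cycles (a cycle meeting two mp-components passes twice, non-consecutively, through a clique separator and so has a chord), dismisses minimality of the forward direction with a one-line remark, and in the converse only shows that added edges are confined to mp-components, leaving the minimality of each restriction $H'_{V_i}$ implicit. You instead prove chordality by induction along the RIP ordering via the decomposition definition of decomposability, and you verify Ohtsuki's condition explicitly in both directions by means of your key lemma that, for non-adjacent $u,v$ in a common mp-component $V_i$, the minimal $(u,v)$-separators of $H$ coincide with those of $H_{V_i}$. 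That lemma is true (it is the standard locality of minimal separators under clique-separator decomposition; cf.\ \cite{heggernes}), and your shortcut argument for its reverse half is sound because every connected component $D$ of $H_{N\setminus V_i}$ satisfies $N(D)\cap V_i\subseteq S$ for a single clique separator $S$ attaching the corresponding branch of the junction tree to $V_i$. What your route buys is a fully explicit minimality verification on both sides; what it costs is the extra lemma, which deserves a complete proof since the whole argument leans on it. One small imprecision: your overview asserts that every edge added by \emph{any} triangulation lies inside a single mp-component, which is false for non-minimal triangulations (the complete graph is a triangulation); this is harmless, since in the body of the proof you establish and use the claim only for minimal triangulations via \reflem{ohtsuki}, exactly as the paper does.
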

\begin{pf}
Let $w$ be a cycle $a_1, \dots, a_n, a_{n+1} = a_1, n \ge4$, of
length more than or equal to 4 in $H'$.
First, consider the case that $w$ is not contained in one mp-component.
Let $V_i$ be the last mp-component in an ordering $V_1, \dots, V_m$
satisfying RIP such that $w$ intersects
$V_i\setminus S_i$. Choose a vertex $x$ of $w$ such that $x \in V_i
\setminus S_i$.
Since no edge in $H'$ outside mp-components is added, two (distinct)
branches of $w$ out of
$x$ lead to distinct elements $y, z \in S_i$. Since $S_i$ is a clique,
$y$ and $z$ are adjacent in $H$,
which means that the cycle $w$ has a chord.
We next consider the case $\{ a_1, \dots, a_n \} \subseteq V$ for
some mp-component $V \in\mathcal{V}_{H}$ in $H$.
Since $H'_{V}$ is decomposable, the cycle $w$ also has a chord in $H'$.
Therefore $H'$ is decomposable.
Moreover,
from an equivalent characterization of a minimal triangulation, it follows
that $H'$ is minimal since removing one edge
from $H'$ makes it non-decomposable.
Thus the first statement is proved.

To prove the converse,
consider
an edge $u \uedge v$ added by a triangulation $H'$ of $H$.
Let $u \in V_i \setminus S_j, v \in V_j \setminus S_j$ and $i < j$.
Then by Lemma~\ref{lem:mp-separation-obvious}
$u \in\bigcup_{k < j} V_k \setminus S_j$ and $v \in V_j \setminus
S_j$ are separated by $S_j$ in $H$.
Since $S_j$ is also a clique in $H'$, $H'$ is not a minimal
triangulation from Lemma~\ref{lem:ohtsuki}.
\end{pf}

\subsection{Definition and properties of standard imsets for
undirected graphs}
We define a standard imset for an undirected graph using Lemma~\ref
{lem:tri-mp}.
%
\begin{defn} \label{defn:imset-for-undirected-graph}
For an undirected graph $H$, a standard imset $u_H$ for $H$ is defined as
%
\begin{eqnarray}\label{eq:ugim}
u_H = \delta_N - \sum_{V \in\mathcal{V}_{H}}
\delta_{V} + \sum_{S
\in\mathcal{S}_H}
\nu_H(S) \cdot\delta_S + \sum
_{V \in\mathcal
{V}_{H}} \sum_{G \in\mathfrak{T}(H_V)}
u_{G},
\end{eqnarray}
where for each $G \in\mathfrak{T}(H_V), V \in\mathcal{V}_{H}$,
$u_G$ is the standard imset given by \eqref{eq:dgim}:
\[
u_{G} = \delta_{V} - \sum_{K \in\mathcal{K}_G}
\delta_K + \sum_{S
\in\mathcal{S}_G}
\nu_G(S) \cdot\delta_S.
\]
\end{defn}
Note that, if $H$ is decomposable, the last term of $u_H$ vanishes
because all mp-components are cliques \cite{leimer}.
Thus this imset coincides with \eqref{eq:dgim}.

We show that this imset represents an UG model.
%
\begin{them} \label{thm:ug_main}
For an undirected graph $H$, define $u_H$ as \eqref{eq:ugim}.
Then $u_H \in\mathcal{C}(N)$ and $\mathcal{M}_H = \mathcal{M}_{u_H}$.
\end{them}
\begin{pf}
The first three terms of \eqref{eq:ugim} correspond to the standard
imset for the decomposable graph such that all $V \in\mathcal{V}_H$
are cliques.
Thus, this imset is combinatorial, and hence, $u_H \in\mathcal{C}(N)$.

Let $H'$ be a minimal triangulation of $H$. Since a minimal
triangulation is done in each mp-subgraph from Lemma~\ref{lem:tri-mp},
the following relations hold:
\begin{eqnarray*}
\mathcal{K}_{H'} &=& \bigcup_{V \in\mathcal{V}_{H}}
\mathcal{K}_{H'_V},\qquad  \mathcal{S}_{H'} = \mathcal{S}_H
\cup \biggl( \bigcup_{V \in
\mathcal{V}_{H}} \mathcal{S}_{H'_V}
\biggr),
\\
\mathcal{K}_{H'_{V_1}} \cap\mathcal{K}_{H'_{V_2}} &=& \emptyset,\qquad
\mathcal{S}_{H'_{V_1}} \cap\mathcal{S}_{H'_{V_2}} = \emptyset, \qquad \forall
V_1, V_2 \in\mathcal{V}_{H},
V_1 \neq V_2,
\\
\mathcal{S}_H \cap\mathcal{S}_{H'_V} &= &\emptyset, \qquad \forall
V \in \mathcal{V}_{H},
\qquad
\nu_{H}(S) = \nu_{H'}(S),\qquad  \forall S \in
\mathcal{S}_H.
\end{eqnarray*}
To verify the disjointness $\mathcal{S}_H \cap\mathcal{S}_{H'_V} =
\emptyset$,
we note the fact that for every mp-component $V$, the elements of
$\mathcal{S}_{H'_V}$
are not cliques in $H_V$, because otherwise
$S \in\mathcal{S}_{H'_V}$, being a separator in $H'_V$, is a clique
separator in $H_V$,
which contradicts the primeness of $H_V$.
Now %
consider RIP ordering $V_1, \dots, V_m$ of mp-components of $H$.
For the last mp-component $V_m$,
the elements of $\mathcal{S}_{H'_{V_m}}$ are not cliques in $H$ and intersect
$V_m \setminus S_m$.
Since elements of $\mathcal{S}_H$ are cliques in $H$
and since the elements of $\mathcal{S}_{H'_{V_i}}$
for $i < m$ do not intersect $V_m \setminus S_m$, the class $\mathcal
{S}_{H'_{V_m}}$
is disjoint with those other ones. By decreasing induction on $m$,
the disjointness $\mathcal{S}_H \cap\mathcal{S}_{H'_V} = \emptyset$ holds.

Hence a standard imset for the decomposable graph $H'$ given by \eqref
{eq:dgim} is\vspace*{1pt}
%
\begin{eqnarray}\label{eq:u_H'}
u_{H'} &=& \delta_N - \sum_{K \in\mathcal{K}_{H'}}
\delta_K + \sum_{S \in\mathcal{S}_{H'}}
\nu_{H'}(S) \cdot\delta_S
\nonumber
\\
&=& \delta_N - \sum_{V \in\mathcal{V}_{H}} \sum
_{K \in\mathcal
{K}_{H'_V}} \delta_K + \sum
_{S \in\mathcal{S}_H} \nu_{H}(S) \cdot \delta_S + \sum
_{V \in\mathcal{V}_{H}} \sum_{S \in\mathcal{S}_{H'_V}} \nu
_{H'}(S) \cdot\delta_S
\nonumber
\\
&=& \delta_N - \sum_{V \in\mathcal{V}_{H}}
\delta_V + \sum_{S \in
\mathcal{S}_H}
\nu_{H}(S) \cdot\delta_S
\\
&&{} + \sum_{V \in\mathcal{V}_{H}} \biggl\{ \delta_V -
\sum_{K \in
\mathcal{K}_{H'_V}} \delta_K + \sum
_{S \in\mathcal{S}_{H'_V}} \nu _{H'_V}(S) \cdot\delta_S
\biggr\}
\nonumber
\\
&=& \delta_N - \sum_{V \in\mathcal{V}_{H}}
\delta_V + \sum_{S \in
\mathcal{S}_H}
\nu_{H}(S) \cdot\delta_S + \sum
_{V \in\mathcal
{V}_{H}} u_{H'_V}.\nonumber
\end{eqnarray}
In particular, the comparison with (\ref{eq:ugim}) gives $u_H - u_{H'}
\in\mathcal{C}(N)$.
Let $v_H = \sum_{H' \in\mathfrak{T}(H)} u_{H'}$ given in \eqref
{eq:ugim1}. Then $v_H$ is written as
\begin{eqnarray*}
v_H &=& \sum_{H' \in\mathfrak{T}(H)} \biggl\{
\delta_N - \sum_{V
\in\mathcal{V}_{H}}
\delta_V + \sum_{S \in\mathcal{S}_H} \nu
_{H}(S) \cdot\delta_S + \sum_{V \in\mathcal{V}_{H}}
u_{H'_V} \biggr\}
\\
&=& \bigl|\mathfrak{T}(H)\bigr| \cdot \biggl\{ \delta_N - \sum
_{V \in\mathcal
{V}_{H}} \delta_V + \sum
_{S \in\mathcal{S}_H} \nu_{H}(S) \cdot \delta_S \biggr
\} + \sum_{H' \in\mathfrak{T}(H)} \sum_{V \in\mathcal{V}_{H}}
u_{H'_V}
\\
&=& \bigl|\mathfrak{T}(H)\bigr| \cdot \biggl\{ \delta_N - \sum
_{V \in\mathcal
{V}_{H}} \delta_V + \sum
_{S \in\mathcal{S}_H} \nu_{H}(S) \cdot \delta_S \biggr
\}
\\
&&{} + \sum_{V \in\mathcal{V}_{H}} \sum_{G \in\mathfrak{T}(H_V)}
n_H(V,G) \cdot u_G,
\end{eqnarray*}
where $n_H(V,G) = |\{ H' \in\mathfrak{T}(H)\dvt  H'_V = G \}|$ for $V
\in\mathcal{V}_{H}$ and $G \in\mathfrak{T}(H_V)$,
is the number of minimal triangulations $H' \in\mathfrak{T}(H)$ such
that $H'_V = G$.
Note that $u_H$ in \eqref{eq:ugim} is obtained by replacing the
coefficients of the right-hand side by one.
Thus, $u_H$ and $v_H$ belong to the relative interior of the same face
of $\cone(\mathcal{E}(N))$.
Hence, we have $\mathcal{M}_{u_H} = \mathcal{M}_{v_H}$, which means
$\mathcal{M}_{u_H} = \mathcal{M}_H$ from Theorem~\ref{thm:ug}.
\end{pf}
%
\begin{exmp}
Consider the graph $H$ in Figure~\ref{fig:H} again.
The sets of mp-components and clique minimal vertex separators are
$\mathcal{V}_{H} = \{ abcd, cde \}$ and $\mathcal{S}_H = \{ cd \}$.
Since $V_2 = cde$ is a clique, the minimal triangulation of its
subgraph $H_{V_2}$ is itself.
As for $V_1 = abcd$, the minimal triangulations of $H_{V_1}$ are given
in Figure~\ref{fig:new_imset}(b), (c).
Then the standard imset for $H$ in \eqref{eq:ugim} is
\begin{eqnarray*}
u_H &=& \delta_{abcde} - \delta_{abcd} -
\delta_{cde} + \delta_{cd}
\\
& &{}+ (\delta_{abcd} - \delta_{abd} - \delta_{bcd} +
\delta_{bd}) + (\delta_{abcd} - \delta_{abc} -
\delta_{acd} + \delta_{ac})
\\
&=& u_{\langle ab, e \, | \, cd \rangle} +u_{\langle a, c \, | \, bd
\rangle} + u_{\langle b, d \, | \, ac \rangle},
\end{eqnarray*}
which coincides with \eqref{eq:new}.
\end{exmp}

As in the case of directed acyclic graphs and decomposable graphs,
our standard imset for an undirected graph provides a simpler criterion.
%
\begin{cor}
For an undirected graph $H$ and every triplet $\langle A, B \, | \, C
\rangle \in
\mathcal{T}(N)$, the followings are equivalent:
\begin{enumerate}[(iii)]
\item[(i)]$\langle A, B \, | \, C \rangle \in\mathcal{M}_H$,
\item[(ii)]$u_H - u_{\langle A, B \, | \, C \rangle} \in\mathcal{C}(N)$,
\item[(iii)]$u_H - u_{\langle A, B \, | \, C \rangle} \in\mathcal{S}(N)$.
\end{enumerate}
\end{cor}
\begin{pf}
The implication (ii) $\Rightarrow$ (iii) $\Rightarrow$ (i) is obvious
from the definition
and Theorem~\ref{thm:ug_main}.
Thus, we only need to consider the implication (i) $\Rightarrow$ (ii).
For $\langle A, B \, | \, C \rangle \in\mathcal{M}_H$,
Lemma~\ref{lem:ce} implies that $\langle A, B \, | \, C \rangle \in
\mathcal{M}_{H'}$ for
some minimal triangulation $H'$ of $H$.
Hence, $u_{H'} - u_{\langle A, B \, | \, C \rangle} \in\mathcal
{C}(N)$ from Lemma~\ref{lem:criteria}.
For every $V \in\mathcal{V}_{H}$, some minimal triangulation $G$ of
$H_V$ coincides with $H'_V$ from Lemma~\ref{lem:tri-mp}.
Thus, $u_H - u_{H'} \in\mathcal{C}(N)$ from \eqref{eq:u_H'}, which
implies that
\[
u_H - u_{\langle A, B \, | \, C \rangle} = (u_H - u_{H'}) +
(u_{H'} - u_{\langle A, B \, | \, C \rangle}) \in\mathcal{C}(N).
\]
\upqed
\end{pf}
%
\begin{rem}
In the case of directed acyclic graphs and chain graphs, some graphs
may induce
the same conditional independence model,
and we have to consider the uniqueness of standard imsets for these
graphs (cf. Lemma~\ref{lem:dagim}).
However, in the case of undirected graphs, two different graphs cannot
have the same conditional independence model.
Thus, it is not necessary to consider the uniqueness question.
\end{rem}

\section{Standard imsets for general chain graphs}
\label{sec:CG}

In this section, we define a standard imset for a chain graph, which is
a generalization of an undirected graph and a directed acyclic graph.
Studen\'y and Vomlel \cite{stuvom}, and Studen\'y, Roverato and \v
{S}t\v{e}p\'{a}nov\'{a} \cite{sturovste}
give standard imsets for chain graphs which are equivalent to some
directed acyclic graph.
Using this result, we can derive imsets for general chain graphs.
Moreover, we show that these imsets fully represent CG models by
similar arguments as in the case of undirected graphs.
In the later part of this section, we show the uniqueness of these
imsets for %
equivalent chain graphs using the concept of a feasible merging.

\subsection{Generalization of a triangulation to chain graphs}
\label{subsec:cg-triangulation}
First, we introduce a concept which generalizes a triangulation of an
undirected graph.
In the case of an undirected graph, a triangulation of a graph is
defined as a decomposable graph obtained by adding edges to the input graph.
Since decomposable models %
can be interpreted %
as an undirected graph which is %
equivalent to some directed acyclic graph,
we can define a triangulation of a chain graph in the same way.
%
\begin{defn}
\label{def:chain-triangulation}
A chain graph $H' = (V(H'), E(H')), V(H') = V(H)$ is said to be a
triangulation of a chain graph $H $ if $H'$ satisfies that
\begin{enumerate}[(iii)]
\item[(i)]$a \uedge b \in E(H')$ whenever $a \uedge b \in E(H)$,
\item[(ii)]$a \to b \in E(H')$ whenever $a \to b \in E(H)$, and
\item[(iii)]$H'$ is equivalent to some directed acyclic graph $G$, that is,
$\mathcal{M}_{H'} = \mathcal{M}_G$.
\end{enumerate}
A triangulation $H'$ of $H$ is said to be minimal if
there is no triangulation $H''$ of $H$ such that $E(H'')\subset E(H')$ and
$a\rightarrow b \in E(H')$ whenever $a\rightarrow b\in E(H'')$.
\end{defn}

Note that the notion of a minimal triangulation of Definition~\ref
{def:chain-triangulation} is consistent
with  the notion of a minimal triangulation of an undirected graph.
See also Remark~\ref{rem:chain-minimal-triangulation} below.
Hence for a chain graph $H$, we
also denote the set of its minimal triangulations by ${\mathfrak T}(H)$.

The condition (iii) has been characterized by Andersson \textit{et al.} \cite
{andersson} in graphical terms.
For a chain graph $H$ and a chain component $C \in\mathcal{C}_H$,
a closure graph for $C$ is defined as the moral graph $\clo{H}(C) =
(H_{C \cup\pa(C)})^{\mor}$.
%
\begin{prop}[(Andersson \textit{et al.} \cite{andersson})] \label{prop:andersson}
A chain graph is %
equivalent to some directed acyclic graph if and only if
$\clo{H}(C)$ is decomposable for every chain component $C \in\mathcal{C}_H$.
\end{prop}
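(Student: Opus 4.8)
Since this statement is quoted from Andersson et al.\ \cite{andersson}, the plan is to reconstruct a self-contained argument rather than merely cite it, taking \refthm{cg_equivalent} as the main tool. The one structural fact I would record first is that \emph{moralization marries all parents of a chain component}: inside $H_{C \cup \pa(C)}$ the set $C$ is a chain component whose parents are exactly $\pa(C)$, so moralization connects every pair in $\pa(C)$, and hence $\pa(C)$ is a complete set in $\clo{H}(C)$. I will use this repeatedly, together with the fact that a complex $c_0 \to c_1 \uedge \cdots \uedge c_k \leftarrow c_{k+1}$ of a directed acyclic graph must have $k=1$ (an immorality), since a directed acyclic graph has no undirected edges.

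For the ``if'' direction I would construct a directed acyclic graph $G$ equivalent to $H$ by orienting, within each chain component $C$, only the undirected edges of $H_C$. Because $\clo{H}(C)$ is decomposable and $\pa(C)$ is a clique in it, it admits a perfect elimination ordering that eliminates $\pa(C)$ last; orienting every edge of $\clo{H}(C)$ from its later-eliminated endpoint toward its earlier-eliminated endpoint gives an acyclic orientation in which all edges incident to $\pa(C)$ point out of $\pa(C)$. Retaining from this only the edges lying in the skeleton of $H$ recovers the directed edges $\pa(C) \to C$ of $H$ together with an acyclic orientation of $H_C$, discarding the moral marriages among $\pa(C)$ at no cost since they are not edges of $H$. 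Assembling these component orientations and keeping all inter-component directed edges yields a graph $G$ that is acyclic (each component is oriented by a linear order and inter-component edges respect the chain order of $H$) and has the same skeleton as $H$. The decisive step is $\mathcal{M}_G = \mathcal{M}_H$ via \refthm{cg_equivalent}: the underlying graphs agree by construction, and a component-by-component moralization check shows the complexes agree, the point being exactly that a perfect elimination ordering of the chordal graph $\clo{H}(C)$ reproduces precisely the co-parent edges prescribed by moralization.

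For the ``only if'' direction I would argue by contraposition. Suppose some $\clo{H}(C)$ is not decomposable; then it contains an induced chordless cycle of length at least four. Using that $\pa(C)$ is complete in $\clo{H}(C)$, this cycle meets $\pa(C)$ in at most two consecutive vertices, so I can trace it back into $H$. If two non-adjacent parents of $C$ lie on it, then $H$ itself carries a complex $c_0 \to c_1 \uedge \cdots \uedge c_k \leftarrow c_{k+1}$ with $k \ge 2$, which no directed acyclic graph can match. Otherwise the cycle reduces to a chordless undirected path inside $C$ (possibly anchored at one parent), and any directed acyclic graph $G$ with the same skeleton is forced, when it orients that path acyclically, to create an immorality among vertices that are undirected-adjacent in $H$ and therefore carry no complex there. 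In either case $H$ and a hypothetical equivalent directed acyclic graph have different complexes, contradicting \refthm{cg_equivalent}.

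I expect the main obstacle to be the equivalence verification in the ``if'' direction: gluing the locally constructed orientations into one acyclic graph and proving that re-moralizing it recovers exactly the separations of $H$, equivalently that it creates no spurious complex and destroys none. This is the step that genuinely uses the chordality hypothesis, through the correspondence between perfect elimination orderings and immoralities, and it is the one I would write out in full; the remaining case analysis in the contrapositive is routine once $\pa(C)$ is known to be complete.
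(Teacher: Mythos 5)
The paper does not prove this proposition at all: it is imported verbatim from Andersson, Madigan and Perlman \cite{andersson} and used as a black box (only its refinement, \reflem{andersson}, is restated, also without proof). So there is nothing in the paper to compare your argument against except the citation itself. Your reconstruction follows what is essentially the standard proof of that result --- Frydenberg's characterization (\refthm{cg_equivalent}) combined with the two classical facts that moralization makes $\pa(C)$ complete in $\clo{H}(C)$ and that a perfect elimination ordering of a chordal graph yields an immorality-free orientation --- and the overall architecture is sound. Two places deserve to be made explicit when you write it out. First, in the ``if'' direction the claim that ``the complexes agree'' silently requires that $H$ has no complex $c_0 \to c_1 \uedge \cdots \uedge c_k \leftarrow c_{k+1}$ with $k \ge 2$, since a directed acyclic graph can only match complexes with $k=1$; this does follow from the hypothesis, because such a complex induces the chordless cycle $c_0, c_1, \dots, c_{k+1}, c_0$ of length $k+2 \ge 4$ in $\clo{H}(C)$ via the moral edge $c_0 \uedge c_{k+1}$, but it is a separate check from the perfect-elimination-ordering computation and should be stated. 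Second, your case split in the contrapositive omits the configuration where the chordless cycle of $\clo{H}(C)$ passes through two parents of $C$ that \emph{are} adjacent in $H$: then no moral edge lies on the cycle and no long complex of $H$ arises, yet the cycle is not a chordless path inside $C$ anchored at a single parent either. That case is still disposed of by your second mechanism (every acyclic orientation of a chordless cycle of length at least four has a collider at some vertex, and at whichever vertex it occurs the induced subgraph of $H$ on those three vertices fails to be a complex), but as written it falls between your two branches and should be folded in explicitly.
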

%
\begin{lem}[(cf. Remark~4.2 in \cite{andersson})]\label{lem:andersson}
For $a \in N$ and $A \subseteq N$, let $\ch_{A}(a) = \ch(a) \cap A$
be the set of all children in $H$ that occur in $A$.
For any chain component $C \in\mathcal{C}_H$, the closure graph $\clo
{H}(C) = (H_{C \cup\pa(C)})^{\mor}$ is
decomposable if and only if:
\begin{enumerate}[(iii)]
\item[(i)]$H_C$ is decomposable,
\item[(ii)] for every $a \in\pa(C)$, and every non-adjacent pair $c,d \in
\ch_{C}(a)$,
we have $c \indep d \, |\linebreak[4]  (\ch_{C}(a) \setminus cd)\,[H_C]$ (in
particular $\ch_{C} (a) \setminus cd \neq \emptyset$), and
\item[(iii)] for every distinct pair $a,b \in\pa(C)$, and every $c\in\ch
_{C}(a) \setminus\ch_{C}(b), d \in\ch_{C}(b) \setminus\ch_{C}(a)$,
we have $\indtri{c}{d}{(\ch_{C}(a) \ch_{C}(b) \setminus cd)}{H_C}$
(in particular, $\ch_{C}(a) \ch_{C}(b) \setminus cd \neq \emptyset$,
and $c,d$ are non-adjacent).
\end{enumerate}
\end{lem}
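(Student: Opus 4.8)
The plan is to compute the exact shape of the closure graph $\clo{H}(C)$ and then set up a one-to-one correspondence between chordless cycles of length $\ge 4$ in $\clo{H}(C)$ and violations of (i)--(iii). Since decomposability is equivalent to chordality, the lemma amounts to: $\clo{H}(C)$ has no chordless cycle of length $\ge 4$ iff (i)--(iii) all hold.

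First I would pin down the structure of $\clo{H}(C)$. As $C$ is a chain component, $H_C$ is connected, all of its edges are undirected, and every vertex of $\pa(C)$ meets $C$ only through arrows pointing into $C$; hence $C$ is a chain component of $H_{C\cup\pa(C)}$ whose parent set is exactly $\pa(C)$. Moralization therefore inserts an undirected edge between every pair of vertices of $\pa(C)$, and after passing to the underlying graph $\clo{H}(C)$ is the union of (a) the complete graph on $\pa(C)$, (b) the undirected graph $H_C$ on $C$, and (c) the edges $a\uedge c$ for $a\in\pa(C)$ and $c\in\ch_{C}(a)$; any remaining moral edges fall inside the clique $\pa(C)$ and change nothing. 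In particular the subgraph induced on $C$ is exactly $H_C$, and $a\in\pa(C)$ is adjacent inside $C$ to precisely $\ch_{C}(a)$.

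Next I would classify chordless cycles of length $\ge 4$ by how many parent vertices they use. Because $\pa(C)$ is a clique, such a cycle contains at most two of its vertices (three pairwise-adjacent vertices on a cycle of length $\ge 4$ always yield a chord), and when it contains two they must be consecutive on the cycle. This gives three cases. A $0$-parent cycle lies entirely in $H_C$, so such a cycle exists iff (i) fails. A $1$-parent cycle through $a$ has the form $a,c,p_1,\dots,p_r,d,a$ with $c,d\in\ch_{C}(a)$ non-adjacent and the internal path in $C$ avoiding $\ch_{C}(a)$; using that in an undirected graph $\nindep$ of two vertices given $S$ is exactly the existence of a path avoiding $S$, and that a shortest such path is induced, such a cycle exists iff some non-adjacent $c,d\in\ch_{C}(a)$ satisfy $\indtri{c}{d}{\ch_{C}(a)\setminus cd}{H_C}$ being \emph{false}, i.e.\ iff (ii) fails. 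A $2$-parent cycle uses the clique edge $a\uedge b$ and returns through $C$ via $d\in\ch_{C}(b)\setminus\ch_{C}(a)$ and $c\in\ch_{C}(a)\setminus\ch_{C}(b)$, its interior avoiding $\ch_{C}(a)\ch_{C}(b)$; inspection of the forbidden chords shows the endpoints must indeed lie in those set differences and the interior in neither children-set, so such a cycle exists iff $\indtri{c}{d}{\ch_{C}(a)\ch_{C}(b)\setminus cd}{H_C}$ fails for some admissible $a,b,c,d$ (the degenerate case $c\uedge d$ giving the $4$-cycle $a,c,d,b$ is subsumed, since an edge is a path avoiding every separator), i.e.\ iff (iii) fails. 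Assembling the three equivalences yields the lemma.

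The parenthetical clauses follow from connectivity of $C$: any $c,d\in C$ are joined by a path in $H_C$, so a separator achieving $\indtri{c}{d}{\cdot}{H_C}$ cannot be empty, which forces $\ch_{C}(a)\setminus cd\neq\emptyset$ and $\ch_{C}(a)\ch_{C}(b)\setminus cd\neq\emptyset$; likewise an edge $c\uedge d$ would be a path avoiding every separator, so the separation statements force $c,d$ non-adjacent. The one step that needs real care, and which I expect to be the main obstacle, is the two-parent bookkeeping: verifying that chordlessness rules out exactly the edges $a\uedge c$ for $c$ on the $b$-side, $b\uedge d$ for $d$ on the $a$-side, and the internal chords, so that the cycle matches precisely the quantifier range in (iii). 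This is a finite check of which chords chordlessness excludes, but it is where the correspondence could be stated imprecisely if one is not careful.
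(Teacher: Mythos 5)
The paper does not actually prove this lemma: it is imported verbatim as a known result, attributed to Remark 4.2 of Andersson, Madigan and Perlman \cite{andersson}, and the text that follows only illustrates the three conditions with examples. Your argument therefore cannot be compared to an in-paper proof; judged on its own, it is correct and is essentially the standard direct proof one would give (and close in spirit to the source). The two load-bearing steps both check out. First, the structure of $\clo{H}(C)$: since $H$ has no directed cycles, no vertex of $C$ points into $\pa(C)$ and no undirected edge joins $C$ to $\pa(C)$, so the only chain component of $H_{C\cup\pa(C)}$ with parents is $C$ itself (with parent set all of $\pa(C)$), and moralization completes $\pa(C)$ while leaving the induced graph on $C$ equal to $H_C$ and the cross edges equal to $a\uedge c$ for $c\in\ch_{C}(a)$. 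Second, the cycle classification: a chordless cycle of length at least $4$ meets the clique $\pa(C)$ in at most two, necessarily consecutive, vertices, and in each of the three cases the shortest-path-avoiding-a-separator argument (a shortest path in the vertex-deleted subgraph is induced in $H_C$) converts a failed separation statement into a chordless cycle and conversely; your handling of the degenerate length-$4$ two-parent cycle via ``an adjacent pair is never separated'' is exactly right, as is deriving the parenthetical non-emptiness and non-adjacency clauses from the connectivity of the chain component $C$. The only places where the write-up is thinner than a referee might like are the explicit verification that no moral edge lands inside $C$ and the finite chord check in the two-parent case, but you have correctly identified both and the claims are true, so I see no gap.
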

%
\begin{exmp}
We show in Figure~\ref{fig:notdagcg} the examples of chain graphs which
violate the conditions of Lemma~\ref{lem:andersson}.
These graphs have only one chain component $C$ and its parent set.
In Figure~\ref{fig:notdagcg}(1), the subgraph $H_C$ is not
decomposable. In
Figure~\ref{fig:notdagcg}(2), %
$c$ and $d$ are not separated by $\ch_{C} (a) \setminus cd = \emptyset
$ because of a path $c \uedge b \uedge d$.
In Figure~\ref{fig:notdagcg}(3), %
$c\in\ch_{C}(a) \setminus\ch_{C}(b)$ and $d \in\ch_{C}(b)
\setminus\ch_{C}(a)$ are adjacent.
Thus, $c$ and $d$ are not separated by $\ch_{C} (a) \ch_{C}(b)
\setminus cd = e$.
Their closure graphs are shown in Figure~\ref{fig:notdagcg2}.
These figures show that they are not decomposable, which implies that
the graphs in Figure~\ref{fig:notdagcg} are not
equivalent to any directed acyclic graph from Proposition~\ref{prop:andersson}.
\end{exmp}
%
\begin{figure}

\includegraphics{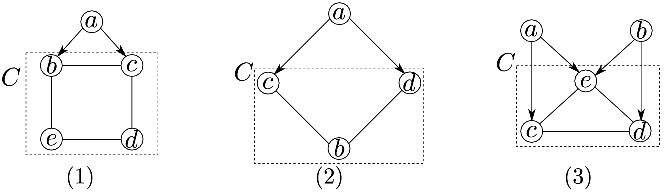}

\caption{Examples of chain graphs violating the conditions of Lemma
\protect\ref{lem:andersson}.}
\label{fig:notdagcg}
\end{figure}

As these facts suggest, it is enough %
to consider a minimal triangulation of $H_{C \cup\pa(C)}$ for each
chain component $C \in\mathcal{C}_H$ instead of the whole $H$.
In fact, if a CG model induced by $H$ coincides with none of DAG
models, then at least one of the conditions (i), (ii) or (iii) in
Lemma~\ref{lem:andersson} is violated.
When these conditions are violated, by adding edges between vertices in
some $C$ or between a vertex in $C$ and a vertex in $\pa(C)$, we can
satisfy these conditions without adding any other edges.
Conversely, all minimal triangulations of $H$ are obtained by a minimal
triangulation of $H_{C \cup\pa_{H}(C)}$ for each chain component $C
\in\mathcal{C}_H$.
Suppose that, for a triangulation $H'$ of $H$, there exists %
$C \in\mathcal{C}_H$ such that the vertex set $V(\clo{H}(C))$ is a
proper subset of $V(\clo{H'}(C'))$ for some $C' \in\mathcal{C}_{H'}$.
This is the case when by the triangulation we add undirected edges
among two distinct components of $H$ or directed edges between the
component $C$ and some vertices which are not parents of $C$ in $H$.
Since $\clo{H'}(C')$ is decomposable by Proposition~\ref
{prop:andersson}, the same
is true for its induced subgraph over the vertex set $V(\clo{H}(C))$.
Thus, we only need to add edges among $C \cup\pa_{H}(C)$.
Moreover,
the set of chain components of a triangulation is identical with that
of the input graph. %

%
\begin{figure}[b]

\includegraphics{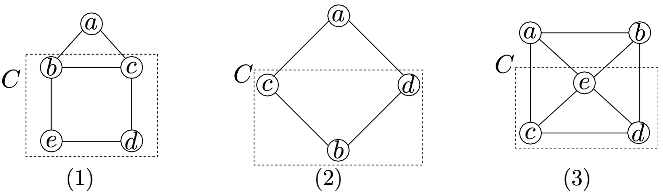}

\caption{The closure graphs of Figure~\protect\ref{fig:notdagcg}.}
\label{fig:notdagcg2}
\end{figure}
%
\begin{rem}
\label{rem:chain-minimal-triangulation}
The above argument shows how to obtain a minimal triangulation of chain graphs.
Let $H$ be a chain graph.
For each $C \in\mathcal{C}_H$ and the parent set
$\pa(C)$ in $H$, a minimal triangulation $H'_{C \cup\pa(C)}$ of
$H_{C \cup\pa(C)}$ is obtained as follows.
Let $G = \clo{H}(C)$ be a closure graph of a chain component $C$ and
$G'$ be a minimal triangulation of $G$.
Then for $F = \clo{E}(G')\setminus\clo{E}(G) $ one constructs a
minimal triangulation $H'_{C \cup\pa(C)}$ by adding
\begin{itemize}
\item an undirected edge $a\uedge b$ provided $(a, b) \in F$ and $a, b
\in C$,
\item a directed edge $a \rightarrow b$ provided $(a, b) \in F$, $a \in
\pa(C)$ and $b \in C$.
\end{itemize}
Indeed, one gets a chain graph consistent with the chain components
order for $H$,
because every vertex in $\pa(C)$ has at least one arrow towards $C$.
%
\begin{figure}

\includegraphics{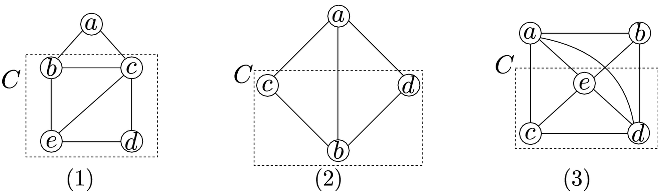}

\caption{Examples of minimal triangulations of Figure~\protect\ref{fig:notdagcg2}.}
\label{fig:cgtriangulation_ug}
\end{figure}
%
\begin{figure}[b]

\includegraphics{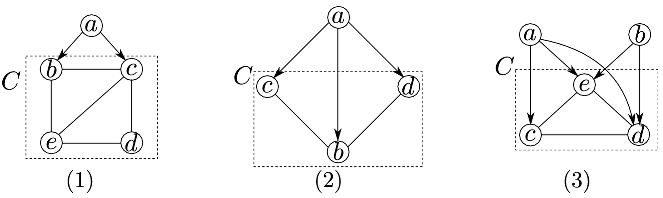}

\caption{Examples of minimal triangulations of  Figure~\protect\ref{fig:notdagcg}.}
        \label{fig:cgtriangulation}
\end{figure}

Note also that in case of
an undirected graph the obtained minimal triangulation is an undirected graph.
\end{rem}
%
\begin{exmp}
Consider minimal triangulations of the graphs in Figure~\ref{fig:notdagcg}.
Examples of minimal triangulations of closure graphs (Figure~\ref{fig:notdagcg2}) for these graphs are shown in Figure~\ref{fig:cgtriangulation_ug}.
In Figure~\ref{fig:cgtriangulation_ug}(1), a minimal triangulation of the
closure graph is obtained by adding the edge $c \uedge e$.
Since $c$ and $e$ belong to the same chain component, adding the edge
$c\uedge e$ gives a minimal triangulation (Figure~\ref{fig:cgtriangulation}(1))
of the chain graph in Figure~\ref{fig:notdagcg}(1).
In Figure~\ref{fig:cgtriangulation_ug}(2), the edge $a\uedge b$ is added.
Since $a$ and $b$ belong to different chain components and there are
directed edges from the chain component of $a$ to that of $b$,
a minimal triangulation (Figure~\ref{fig:cgtriangulation}(2)) of the
graph in
Figure~\ref{fig:notdagcg}(2) is obtained by adding the edge $a \to b$.
As for the conditions of Lemma~\ref{lem:andersson}, $\indtri
{c}{d}{(\ch
_{H,C}(a) \setminus cd)}{H_C}$ holds because $\ch_{H,C}(a) \setminus
cd = b$.
In Figure~\ref{fig:cgtriangulation_ug}(3), we add the edge $a\uedge d$,
hence, obtain the graph in Figure~\ref{fig:cgtriangulation}(3) in the same
way as (2).
Since $\ch_{H,C}(b) \setminus\ch_{H,C}(a) = \emptyset$ in this
graph, the condition (iii) is satisfied automatically.
\end{exmp}

The following lemma immediately holds from the above discussion.
%
\begin{lem} \label{lem:mt1}
For a chain graph $H$ and a chain component $C \in\mathcal{C}_H$,
assume that $\clo{H}(C)$ is decomposable.
Then for every minimal triangulation $H'$ of $H$, we have $H_{C \cup
\pa_H(C)} = H'_{C \cup\pa_{H'}(C)}$.
\end{lem}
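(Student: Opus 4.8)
The plan is to reduce the claim to the explicit construction of minimal triangulations described in the remark preceding the lemma, combined with the chain ordering of $H$. First I would record two structural facts about an arbitrary minimal triangulation $H'$ of $H$: that the chain components of $H'$ coincide with those of $H$, and that $\pa_{H'}(C)=\pa_H(C)$. Both follow from the orientation rule of the construction: an added edge with both endpoints in a single chain component $C_i$ is made undirected and so keeps $C_i$ connected without merging it with any other component, while an added edge between $C_i$ and $C_j$ with $i<j$ is directed from the earlier to the later component and hence neither creates a new undirected link across components nor reverses the chain order. Consequently $C\in\mathcal{C}_{H'}$ and $C\cup\pa_{H'}(C)=C\cup\pa_H(C)$, so the two induced subgraphs in the statement live on the same vertex set and it remains only to compare their edges.

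Next I would bring in the decomposability hypothesis. By the discussion preceding the remark, $H'$ is obtained by choosing, for each chain component $C'\in\mathcal{C}_H$, a minimal triangulation of the closure graph $\clo{H}(C')$ and translating its new edges back into $H$ (undirected inside a component, directed across components, per the ordering). Since $\clo{H}(C)$ is assumed decomposable, its unique minimal triangulation is $\clo{H}(C)$ itself; hence the step associated with $C$ introduces no new edge whatsoever, and in particular nothing inside $C\cup\pa_H(C)$. This disposes of every edge that the triangulation adds "on account of" $C$.

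The substantive step, and the one I expect to be the main obstacle, is to show that no edge introduced while triangulating some \emph{other} closure graph $\clo{H}(C')$ with $C'\neq C$ can land inside $C\cup\pa_H(C)$. Each such edge has both endpoints in $C'\cup\pa(C')$, so I would analyse how an endpoint can simultaneously lie in $C\cup\pa_H(C)$. If both endpoints lie in $\pa(C')$, they are already joined in $\clo{H}(C')$ by moralization, so the edge is not a genuine triangulation edge and is never added. If one endpoint lies in $C$, then, since $C$ is disjoint from $C'$, it must lie in $\pa(C')$, which by the chain ordering places $C$ strictly before $C'$; I would play this off against the order constraints imposed by the location of the second endpoint to reach a contradiction. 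The delicate case, where the decomposability of $\clo{H}(C)$ must genuinely be used, is an added edge joining two vertices of $\pa_H(C)$ that happen to sit in the closure region of another component: here the chain ordering alone does not close the argument, and I would invoke \reflem{andersson}(iii) (together with \refprop{andersson}, via minimality of $H'$) to control how two parents of $C$ and their children inside $C$ may interact and thereby exclude this configuration. Once all such cross-component contributions are ruled out, the added edges avoid $C\cup\pa_H(C)$ entirely, and combining this with the first two steps gives $H_{C\cup\pa_H(C)}=H'_{C\cup\pa_{H'}(C)}$.
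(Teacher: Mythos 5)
Your steps (i) and (ii) are sound, and they reproduce essentially all that the paper itself offers for this lemma (the paper states that it ``immediately holds from the above discussion'' and gives no further argument). The genuine gap is in your third step, and you have in fact put your finger on exactly the case that cannot be closed: an added edge joining two vertices of $\pa_H(C)$ that sit in the closure region of another component is \emph{not} excludable, and consequently the lemma as literally stated is false. Concretely, take $N=\{u,x,v,y,c\}$ with the chordless undirected cycle $u\uedge x\uedge v\uedge y\uedge u$ and directed edges $u\to c$, $v\to c$. Then $C=\{c\}$ is a chain component with $\pa_H(C)=\{u,v\}$, and $\clo{H}(C)$ is the complete graph on $\{u,v,c\}$ (the edge $u\uedge v$ arises by moralization), hence decomposable, so the hypothesis holds. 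But the other chain component $C'=\{u,x,v,y\}$ induces a chordless $4$-cycle, and the minimal triangulation $H'$ obtained by inserting the chord $u\uedge v$ yields $u\uedge v\in E(H'_{C\cup\pa_{H'}(C)})\setminus E(H_{C\cup\pa_H(C)})$. Here parts (ii) and (iii) of \reflem{andersson} are vacuous for $C$, since both parents have the single child $c$; the tool you propose for the delicate case therefore gives no constraint, and no argument can succeed because the offending configuration is realizable.

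What does survive, and what the paper actually needs downstream, is equality of the \emph{moral} graphs, $(H_{C\cup\pa_H(C)})^{\mor}=(H'_{C\cup\pa_{H'}(C)})^{\mor}$. Your own case analysis nearly delivers this: an added edge meeting $C\cup\pa_H(C)$ cannot touch $C$ (the ordering argument you sketch, plus the fact that an edge between two parents of a component is already a moral edge of that component's closure graph and hence is never among the edges added), so it must join two vertices of $\pa_H(C)$; and any two vertices of $\pa_H(C)$ are joined by moralization with respect to $C$ in both closure graphs anyway. Alternatively, the literal conclusion $H_{C\cup\pa_H(C)}=H'_{C\cup\pa_{H'}(C)}$ does hold under the stronger hypothesis that $\clo{H}(C'')$ is decomposable for every chain component $C''$ meeting $C\cup\pa_H(C)$, which is exactly the situation in \refcor{mt1}, where the whole ancestral subgraph is assumed equivalent to a directed acyclic graph; so the paper's subsequent use of the lemma is unaffected. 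To repair your proof you must either weaken the conclusion to the moral-graph statement or strengthen the hypothesis; as posed, the plan cannot be completed.
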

%
\begin{cor} \label{cor:mt1}
For a chain graph $H$ and a subset $K \subseteq N$ of the vertex set,
assume that $H_{\an(K)}$ is %
equivalent to some directed acyclic graph.
Then for every minimal triangulation $H'$ of $H$, $H_{\an_{H}(K)} =
H'_{\an_{H'(K)}}$ holds.
\end{cor}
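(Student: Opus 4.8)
The plan is to prove the stated identity in two halves: first that the two ancestral sets agree as vertex sets, $\an_{H}(K) = \an_{H'}(K)$, and then that $H$ and $H'$ induce the same graph on this common set. Write $A = \an_{H}(K)$. The first thing I would establish is that $A$ is a union of chain components of $H$ and is closed under taking parents. If $v \in A$ and $w$ lies in the same chain component as $v$, then an undirected path within that component followed by a path from $v$ to $K$ witnesses $w \in A$; and if $p \in \pa_H(C)$ for a chain component $C \subseteq A$, then $p \to c \rightsquigarrow k$ for suitable $c \in C$, $k \in K$ shows $p \in A$. Consequently the chain components of $H_A$ are exactly the chain components of $H$ contained in $A$, and for each such $C$ we have $\pa_{H_A}(C) = \pa_H(C)$, so the closure graphs coincide, $\clo{H}(C) = \clo{H_A}(C)$. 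Since $H_A$ is equivalent to a directed acyclic graph by hypothesis, \refprop{andersson} gives that $\clo{H}(C)$ is decomposable for every chain component $C \subseteq A$, which is precisely what is needed to invoke \reflem{mt1} for these $C$.

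For the vertex sets, I would argue that $\an_{H}(K)$ depends only on the chain components, the parent sets, and which components meet $K$. Along any path the index of the chain component is non-decreasing, so $v$ is an ancestor of $k$ exactly when the component of $v$ reaches (possibly trivially) the component of $k$ in the directed graph on chain components whose arcs record $C \cap \pa(C') \neq \emptyset$; the converse is obtained by travelling freely inside each connected component and crossing between components along the recorded arcs. Because a minimal triangulation preserves both the chain components and the parent set of each component, as noted before \reflem{mt1}, this component-level reachability graph is identical for $H$ and $H'$, and therefore $\an_{H'}(K) = \an_{H}(K) = A$.

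It remains to show $H_A = H'_A$ as induced graphs, and this is the step I expect to be the main obstacle, since a triangulation does add edges and I must rule out any of them landing inside $A$. The key observation is that every edge with both endpoints in $A$ already sits inside a single closure neighbourhood $C \cup \pa_H(C)$ with $C \subseteq A$: an undirected edge $u \uedge v$ lies in one component $C \subseteq A$, while a directed edge $u \to v$ has $v$ in some component $C_v \subseteq A$ and $u \in \pa_H(C_v)$, using that the parent sets are preserved, so that a directed edge of $H'$ into $C_v$ must issue from a vertex of $\pa_H(C_v)$. Applying \reflem{mt1} to this $C$ gives $H'_{C \cup \pa_H(C)} = H_{C \cup \pa_H(C)}$, whence the edge belongs to $H$ if and only if it belongs to $H'$; running this over all edges of $H_A$ and of $H'_A$ yields $H_A = H'_A$. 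Combining the two halves gives $H_{\an_{H}(K)} = H_A = H'_A = H'_{\an_{H'}(K)}$, as required.
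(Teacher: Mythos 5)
Your proposal is correct and follows essentially the same route as the paper: establish $\an_{H}(K)=\an_{H'}(K)$, use \refprop{andersson} to get decomposability of $\clo{H}(C)$ for each chain component $C$ inside the ancestral set, and then invoke \reflem{mt1} componentwise. You simply make explicit several steps the paper dismisses as evident (closedness of the ancestral set under parents, and the covering of all edges of the induced subgraph by closure neighbourhoods), which is a faithful elaboration rather than a different argument.
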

\begin{pf}
Evidently, $\an_{H}(K) = \an_{H'}(K)$ holds. Also, for every chain
component $C \in\mathcal{C}_{H_{\an(K)}}$, the closure graph $\clo
{H}(C)$ is decomposable from Proposition~\ref{prop:andersson}.
Hence, from Lemma~\ref{lem:mt1}, for every minimal triangulation $H'$, we
have $H_{C \cup\pa_H(C)} = H'_{C \cup\pa_{H'}(C)}$,
which implies the corollary.
\end{pf}

As for separations of a chain graph and its minimal triangulation, we
have a similar result to Lemma~\ref{lem:ce} for undirected graphs. See
Figure~\ref{fig:cgconstruction}.
%
\begin{figure}

\includegraphics{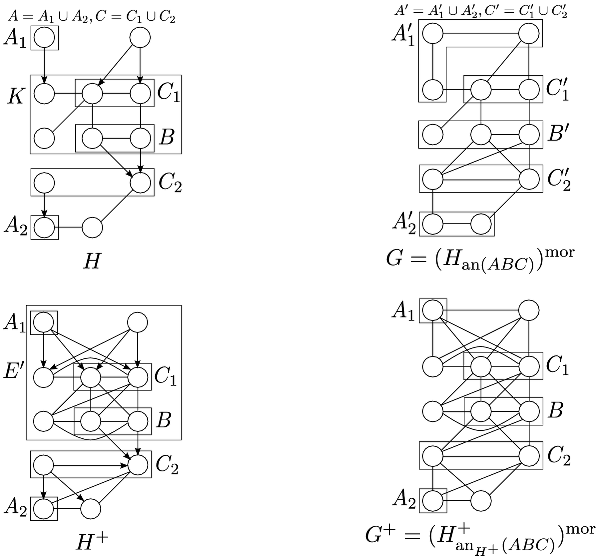}

\caption{A construction of $H^+$ from a chain graph $H$ in Lemma \protect\ref
{lem:cg}.}
\label{fig:cgconstruction}
\end{figure}
%
\begin{lem}\label{lem:cg}
For every chain graph $H$ and triplet $\langle A, B \, | \, C \rangle
\in\mathcal{M}_H$,
there exists a minimal triangulation $H'$ of $H$ such that $\langle A,
B \, | \, C \rangle \in\mathcal{M}_{H'}$.
\end{lem}
\begin{pf}
As in the case of undirected graphs, it suffices to find a
triangulation $H'$ which satisfies $\langle A, B \, | \, C \rangle \in
\mathcal{M}_{H'}$.
First, we construct a triangulation of a subgraph $H_{\an(ABC)}$,
and then consider the whole graph $H$.

From the definition of the separation criterion of a chain graph, we
have $\langle A, B \, | \, C \rangle \in\mathcal{M}_G$ for $G =
(H_{\an(ABC)})^{\mor}$.
We define a partition of $N' = \an_H(ABC)$ as in the same way of the
proof of Lemma~\ref{lem:ce}, that is,
\begin{eqnarray*}
A' &=& \bigl\{ i \in N' \dvt i \mbox{ is connected with }
A \mbox{ in } G_{N'
\setminus C} \bigr\},
\\
C' &=& C \quad \mbox{and}\quad  B' = N' \setminus
A'C'.
\end{eqnarray*}
Then for each $K \in\mathcal{C}_H, K \subseteq N'$, we define a local
graph $H^+(K)$ over
$E' = K \cup\pa_H(K)$ as a graph obtained by removing edges between
$A' \cap E'$ and
$B' \cap E'$ from the graph which has an undirected graph over $K$ and
has all directed edges
$a \to b$ from $a \in\pa(K)$ to $b \in K$.
The graph $H^+$ is defined as the union of these local graphs over $N'$
and outside $H^+_{N'}$ as the same as $H$.
Since $H^+$ is also a chain graph, $H$ and $H^+$ have the same
components and their parent sets.
Therefore, we have $\an_{H^+}(ABC) = \an_H(ABC)$.
Moreover, closure graphs $\clo{H^+}(K)$ for components $K$ are cliques
over $E'$ with removed edges between $A' \cap E'$ and $B' \cap E'$, and
therefore $\clo{H^+}(K)$ is decomposable. This means that $H^+_{\an
_{H^+}(ABC)}$ is equivalent to some acyclic directed graph from
Proposition~\ref{prop:andersson}.
Let $G^+ = (H^+_{\an_{H^+}(ABC)})^{\mor}$.
From the construction of $H^+$, there is no edge between $A'$ and $B'$
in $G^+$.
Thus, we also have $\langle A, B \, | \, C \rangle \in\mathcal{M}_{G^+}$.

Next, we consider the whole graph.
Let $H'$ be a minimal triangulation of $H^+$ (which may be $H^+$ itself).
Since $(H^+_{\an_{H^+}(ABC)})^{\mor}$ is decomposable, $H^+_{\an
_{H^+}(ABC)} = H'_{\an_{H'}(ABC)}$ from Corollary~\ref{cor:mt1}.
Therefore, we have $G' = (H^+_{\an_{H^+}(ABC)})^{\mor} = (H'_{\an
_{H'}(ABC)})^{\mor}$, which implies that $\langle A, B \, | \, C
\rangle \in\mathcal
{M}_{H'}$.
\end{pf}

\subsection{Definition and properties of standard imsets for chain graphs}
In this section, we define a standard imset for a chain graph and show
that it fully represents the CG model induced by this graph.

When a chain graph $H$ is %
equivalent to some directed acyclic graph, its standard imset is
defined as follows \cite{stuvom,sturovste}:\vspace*{2pt}
%
\begin{eqnarray}\label{eq:cgdagim}
u_H = \delta_N - \delta_\emptyset+ \sum
_{C \in\mathcal{C}_H} \biggl\{ \delta_{\pa_H(C)} - \sum
_{K \in\mathcal{K}_{\clo{H}(C)}} \delta_{K} + \sum
_{S \in\mathcal{S}_{\clo{H}(C)}} \nu_{\clo{H}(C)}(S) \cdot \delta_S
\biggr\}.
\end{eqnarray}
This definition is a generalization of that of a directed acyclic graph
\eqref{eq:dagim} and
a decomposable graph \eqref{eq:dgim}. %
Moreover, we have the following lemma about this imset.
%
\begin{prop}[(Studen\'y \textit{et al.} \cite{sturovste})] \label{prop:cgdag_unique}
Assume that two chain graphs $H_1, H_2$ are %
equivalent to some directed acyclic graph.
Then $\mathcal{M}_{H_1} = \mathcal{M}_{H_2}$ if and only if $u_{H_1}
= u_{H_2}$.
\end{prop}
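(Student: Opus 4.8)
The plan is to reduce the statement to the uniqueness of standard imsets for directed acyclic graphs (\reflem{dagim}). The bridge is the following key lemma, which I would establish first: for every chain graph $H$ that is equivalent to some directed acyclic graph, there is a directed acyclic graph $G$ with $\mathcal{M}_G = \mathcal{M}_H$ whose standard imset \eqref{eq:dagim} coincides with the imset $u_H$ defined in \eqref{eq:cgdagim}. Granting this lemma, both directions follow quickly. For the backward direction, $u_{H_1} = u_{H_2}$ gives, via the associated DAGs $G_1, G_2$, that $\mathcal{M}_{H_i} = \mathcal{M}_{G_i} = \mathcal{M}_{u_{G_i}} = \mathcal{M}_{u_{H_i}}$ by \reflem{dagim}, so the two models agree. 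For the forward direction, $\mathcal{M}_{H_1} = \mathcal{M}_{H_2}$ yields $\mathcal{M}_{G_1} = \mathcal{M}_{G_2}$; \reflem{dagim} then gives $u_{G_1} = u_{G_2}$, whence $u_{H_1} = u_{G_1} = u_{G_2} = u_{H_2}$.

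To construct $G$ from $H$, I would work chain component by chain component. By \refprop{andersson}, the closure graph $\clo{H}(C)$ is decomposable for every $C \in \mathcal{C}_H$. For each $C$ I would orient the undirected edges inside $C$ according to a perfect elimination ordering of the decomposable graph $\clo{H}(C)$, while keeping every directed edge of $H$ from $\pa_H(C)$ into $C$ as is and placing all of $\pa_H(C)$ before $C$ in the ordering. Taking these local orientations together, and respecting the chain order $C_1, \dots, C_m$ of $H$ across components, produces a directed graph $G$; it is acyclic because directions inside a component follow an elimination ordering and all cross-component edges already point forward in the chain order. That $\mathcal{M}_G = \mathcal{M}_H$ I would verify through \refthm{cg_equivalent}: $G$ and $H$ have the same underlying graph, and because orienting a decomposable graph along a perfect ordering introduces no new complex and because the parent sets are preserved, they have the same complexes.

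The substantive computation is the imset identity $u_G = u_H$. Rewriting \eqref{eq:cgdagim}, the contribution of a single chain component $C$ is
\begin{align*}
  \delta_{\pa_H(C)} - \sum_{K \in \mathcal{K}_{\clo{H}(C)}} \delta_K + \sum_{S \in \mathcal{S}_{\clo{H}(C)}} \nu_{\clo{H}(C)}(S)\, \delta_S
   = u_{\clo{H}(C)} + \delta_{\pa_H(C)} - \delta_{C \cup \pa_H(C)},
\end{align*}
where $u_{\clo{H}(C)}$ is the decomposable standard imset \eqref{eq:dgim} of $\clo{H}(C)$. On the DAG side, the vertices of $C$ contribute $\sum_{i \in C}\{\delta_{\pa_G(i)} - \delta_{\{i\}\cup \pa_G(i)}\}$ to \eqref{eq:dagim}. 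I would show these two local expressions are equal using the known fact that a decomposable graph and a directed acyclic graph obtained from it by a perfect-ordering orientation share the same standard imset: applied to $\clo{H}(C)$ this matches $u_{\clo{H}(C)}$ against the telescoping DAG sum restricted to $C \cup \pa_H(C)$, while the boundary terms $\delta_{\pa_H(C)} - \delta_{C \cup \pa_H(C)}$ account exactly for $\pa_H(C)$ not being scored as a separate clique. Summing over $C$ and combining with $\delta_N - \delta_\emptyset$ then gives $u_H = u_G$.

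I expect the main obstacle to be precisely this imset identity together with the acyclicity and complex-preservation of the glued orientation: one must check that the local perfect-elimination orientations of the distinct closure graphs are mutually compatible (they can disagree on shared parent vertices only through edges that are already directed in $H$) and that no spurious complex is created at the interfaces between chain components. Once the construction is shown to produce a genuine directed acyclic graph equivalent to $H$ with the matching imset, the proposition is immediate from \reflem{dagim}.
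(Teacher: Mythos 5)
The paper does not actually prove this proposition: it is imported verbatim from Studen\'y, Roverato and \v{S}tep\`anov\`a \cite{sturovste}, and the only thing the text does with it is draw the consequence $u_H\in\mathcal{C}(N)$, $\mathcal{M}_H=\mathcal{M}_{u_H}$ via \reflem{dagim}. So there is no in-paper argument to compare against; what you have written is a reconstruction of the cited source's strategy, and it is sound. Your bridge lemma --- for every chain graph $H$ equivalent to a DAG there is a DAG $G$ with $\mathcal{M}_G=\mathcal{M}_H$ and with \eqref{eq:dagim} equal to \eqref{eq:cgdagim} --- is exactly the content the paper silently relies on when it invokes \reflem{dagim} right after the proposition, and once it is granted both directions are immediate as you say. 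The two points you flag as the real work do go through. For the orientation: moralization makes $\pa_H(C)$ a clique in the decomposable graph $\clo{H}(C)$, so a perfect numbering can start there; orienting only the edges inside $C$ (the moral fill-in among the parents is \emph{not} added to $G$) creates no new immorality, because the earlier neighbours of any vertex form a clique in $\clo{H}(C)$, and two non-adjacent members of $\pa_H(C)$ converging on a vertex of $C$ already constituted a complex of $H$. For the imset identity: the parents in $G$ of $i\in C$ are $\pa_H(i)$ together with the earlier neighbours of $i$ inside $C$, so $\sum_{i\in C}\{\delta_{\pa_G(i)}-\delta_{\{i\}\cup\pa_G(i)}\}$ equals the decomposable standard imset \eqref{eq:dgim} of $\clo{H}(C)$ plus $\delta_{\pa_H(C)}-\delta_{C\pa_H(C)}$, the boundary terms arising from the telescoping sum over the clique $\pa_H(C)$ that you do not re-count inside this component; summing over $C$ gives $u_G=u_H$. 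One detail worth making explicit before appealing to \refthm{cg_equivalent}: under the hypothesis every complex of $H$ has length $k=1$, since a longer complex would leave a chordless four-cycle in some closure graph, contradicting \refprop{andersson}; without this, a DAG could not possibly share the complexes of $H$.
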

Therefore, for a chain graph $H$ which is equivalent to some directed
acyclic graph, we have $u_H \in\mathcal{C}(N)$ and $\mathcal{M}_H =
\mathcal{M}_{u_H}$ from Lemma~\ref{lem:dagim}.
Furthermore, we have the following corollary from Lemma~\ref{lem:criteria}:
%
\begin{cor} \label{cor:cg_criteria}
Suppose that a chain graph $H$ is equivalent to some directed acyclic
graph and let $u_H$ be given in \eqref{eq:cgdagim}.
For a triplet $\langle A, B \, | \, C \rangle \in\mathcal{T}(N)$,
the followings are
equivalent:
\begin{enumerate}[(iii)]
\item[(i)]$\langle A, B \, | \, C \rangle \in\mathcal{M}_H$,
\item[(ii)]$u_H - u_{\langle A, B \, | \, C \rangle} \in\mathcal{C}(N)$,
\item[(iii)]$u_H - u_{\langle A, B \, | \, C \rangle} \in\mathcal{S}(N)$.
\end{enumerate}
\end{cor}

Note that every closure graph $\clo{H}(C)$, $C \in\mathcal{C}_H$, is
decomposable from Proposition~\ref{prop:andersson}.
Thus \eqref{eq:cgdagim} is also written as\vspace*{2pt}
\begin{eqnarray*}
u_H = \delta_N - \delta_\emptyset+ \sum
_{C \in\mathcal{C}_H} \{ \delta_{\pa(C)} -
\delta_{C \pa(C)} + u_{\clo{H}(C)} \},
\end{eqnarray*}
where $u_{\clo{H}(C)}$ is the standard imset
\eqref{eq:dgim} %
for the decomposable graph $\clo{H}(C)$.
This equation suggests that a generalization of \eqref{eq:cgdagim} is
given by replacing $u_{\clo{H}(C)}$ as in \eqref{eq:ugim}.
For $C \in\mathcal{C}_H$ and $V \subseteq C \cup\pa(C)$,
let $\clo{H}(C)_V$ be the subgraph of the closure graph $\clo{H}(C)$
induced by $V$.
%
\begin{defn} \label{def:cgim}
A standard imset $u_H$ for a chain graph $H$ is defined by\vspace*{2pt}
%
\begin{eqnarray}\label{eq:cgim}
u_H = \delta_N - \delta_\emptyset+ \sum
_{C \in\mathcal{C}_H} \{ \delta_{\pa_H(C)} - \delta_{C \pa_H(C)} +
u_{\clo{H}(C)} \},
\end{eqnarray}
where $u_{\clo{H}(C)}, C \in\mathcal{C}_H$, is the standard imset
for the undirected graph $\clo{H}(C)$ given by \eqref{eq:ugim}:
\begin{eqnarray}
u_{\clo{H}(C)} &=& \delta_{C \pa_H(C)} - \sum_{V \in\mathcal
{V}_{\clo{H}(C)}}
\delta_{V} + \sum_{S \in\mathcal{S}_{\clo
{H}(C)}}
\nu_{\clo{H}(C)}(S) \cdot\delta_S
\nonumber
\\
&&{}+ \sum_{V \in\mathcal{V}_{\clo{H}(C)}} \sum_{G \in\mathfrak
{T}(\clo{H}(C)_V)}
u_{G}.
\nonumber
\end{eqnarray}
\end{defn}
Note that when $H$ is a connected undirected graph this imset coincides
with \eqref{eq:ugim},
because the sum in \eqref{eq:cgim} has only one term and
$\delta_{\pa_H(C)}=\delta_\emptyset$, $\delta_N= \delta_{C \pa_H(C)}$.
We can easily prove that the same conclusion holds for any undirected
graph by considering each connected component.
This imset gives a representation of CG models.
The proof is similar to the case of undirected graphs.
%
\begin{them} \label{thm:cg_imset}
For a chain graph $H$, let a standard imset $u_H$ for $H$ be defined by
\eqref{eq:cgim}.
Then $u_H \in\mathcal{C}(N)$ and $\mathcal{M}_{H} = \mathcal{M}_{u_H}$.
\end{them}
\begin{pf}
The argument in Section~\ref{subsec:cg-triangulation} implies that
$\mathcal{C}_H = \mathcal{C}_{H'}$ and $\pa_H(C) = \pa_{H'}(C),
\forall C \in\mathcal{C}_H$,
for a minimal triangulation $H'$ of $H$.
Thus a standard imset $u_{H'}$ for ${H'}$ given by \eqref{eq:cgdagim} is
\begin{eqnarray*}
u_{H'} &=& \delta_N - \delta_\emptyset+ \sum
_{C \in\mathcal
{C}_{H'}} \{ \delta_{\pa_{H'}(C)} -
\delta_{C \pa_{H'}(C)} + u_{\clod{H}(C)} \}
\\
&=& \delta_N - \delta_\emptyset+ \sum
_{C \in\mathcal{C}_H} \{ \delta_{\pa_{H}(C)} - \delta_{C \pa_{H}(C)} +
u_{\clod{H}(C)} \}.
\end{eqnarray*}
As in the proof (of implication $\mathcal{M}_H \subseteq\mathcal
{M}_{u_H}$) of Theorem~\ref{thm:ug_main}, we have $u_{\clo{H}(C)} -
u_{\clod
{H}(C)} \in\mathcal{S}(N)$ for $C \in\mathcal{C}_H$,
which shows that $u_H - u_{H'} \in\mathcal{S}(N)$.
Also, putting $v_H = \sum_{H' \in\mathfrak{T}(H)} u_{H'}$, we have
\begin{eqnarray*}
v_H &=& \sum_{H' \in\mathfrak{T}(H)} \biggl[
\delta_N - \delta _\emptyset+ \sum
_{C \in\mathcal{C}_H} \{ \delta_{\pa(C)} - \delta_{C \pa(C)} +
u_{\clod{H}(C)} \} \biggr]
\\
&=& \bigl|\mathfrak{T}(H)\bigr| \cdot \biggl[ \delta_N - \delta_\emptyset+
\sum_{C \in\mathcal{C}_H} \{ \delta_{\pa(C)} -
\delta_{C
\pa(C)} \} \biggr] + \sum_{H' \in\mathfrak{T}(H)} \sum
_{C \in\mathcal{C}_H} u_{\clod
{H}(C)}
\\
&= &\bigl|\mathfrak{T}(H)\bigr| \cdot \biggl[ \delta_N - \delta_\emptyset+
\sum_{C \in\mathcal{C}_H} \{ \delta_{\pa(C)} -
\delta_{C
\pa(C)} \} \biggr]
\\
&&{} + \sum_{C \in\mathcal{C}_H} \sum_{G \in\mathfrak{T}(\clo
{H}(C))}
n_H(C, G) \cdot u_{G} ,
\end{eqnarray*}
where $n_H(C,G) = |\{ H' \in\mathfrak{T}(H); \clod{H}(C) = G \}|$
for $C \in\mathcal{C}_H$ and $G \in\mathfrak{T}(\clo{H}(C))$, is
the number of minimal triangulations $H'$ of $H$
such that $\clod{H}(C) = G$.
Therefore, as in the proof of {Theorem~\ref{thm:ug_main} for} the case
of an undirected graph, $u_H$ and $v_H$ belong to
the relative interior of the same face of $\cone(\mathcal{E}(N))$.
Thus, we have $\mathcal{M}_{u_H} = \mathcal{M}_{v_H}$.

For every chain graph,
there exists a discrete measure $\PP$ over $N$ such that $\mathcal
{M}_{\PP} = \mathcal{M}_H$ \cite{stubou}.
Moreover, Theorem~\ref{thm:main} implies that $\mathcal{M}_{\PP} =
\mathcal
{M}_w$ for some $w \in\mathcal{S}(N)$.
Hence for every $H' \in\mathfrak{T}(H)$, we have
\[
\mathcal{M}_{u_{H'}} = \mathcal{M}_{H'} \subseteq
\mathcal{M}_H = \mathcal{M}_{\PP} = \mathcal{M}_w,
\]
which implies that $k_{H'} \cdot w - u_{H'} \in\mathcal{S}(N)$ for
some $k_{H'} \in\NN$ from Lemma~\ref{lem:include}.
Putting $k = \sum_{H' \in\mathfrak{T}(H)} k_{H'}$, we have $k \cdot
w - v_H \in\mathcal{S}(N)$.
Therefore $\mathcal{M}_{u_H} = \mathcal{M}_{v_H} \subseteq\mathcal
{M}_w = \mathcal{M}_H$.

Conversely, for every $\langle A, B \, | \, C \rangle \in\mathcal
{M}_H$, there exists
$H' \in\mathfrak{T}(H)$ such that
$\langle A, B \, | \, C \rangle \in\mathcal{M}_{H'}$ from Lemma~\ref
{lem:cg}.
Thus $u_{H'} - u_{\langle A, B \, | \, C \rangle} \in\mathcal{S}(N)$
from Corollary~\ref{cor:cg_criteria}.
Hence, we have
\[
u_H - u_{\langle A, B \, | \, C \rangle} = (u_H - u_{H'}) +
(u_{H'} - u_{\langle A, B \, | \, C \rangle}) \in\mathcal{S}(N)
\]
and $\langle A, B \, | \, C \rangle \in\mathcal{M}_{u_H}$.
\end{pf}

As in the case of undirected graphs, we have the following corollary.
%
\begin{cor}
For a chain graph $H$ and every triplet $\langle A, B \, | \, C
\rangle \in\mathcal
{T}(N)$, the followings are equivalent:
\begin{enumerate}[(iii)]
\item[(i)]$\langle A, B \, | \, C \rangle \in\mathcal{M}_H$,
\item[(ii)]$u_H - u_{\langle A, B \, | \, C \rangle} \in\mathcal{C}(N)$,
\item[(iii)]$u_H - u_{\langle A, B \, | \, C \rangle} \in\mathcal{S}(N)$.
\end{enumerate}
\end{cor}

\subsection{Feasible merging}
From now on, we will consider the uniqueness of the standard imsets for
chain graphs in Definition~\ref{def:cgim}.

In the case of chain graphs which are equivalent to some directed
acyclic graphs, the uniqueness of their standard imsets defined
by \eqref{eq:cgdagim} is given in Proposition~\ref{prop:cgdag_unique}.
Its proof is based on the concept called a feasible merging \cite{sturovste}.
In this section, we review its definition and properties.

Let $H$ be a chain graph. A pair of its chain components $U,L \in
\mathcal{C}_H$ is said to form a meta-arrow $U \rightrightarrows L$ if
there exists a directed edge $a \to b \in E(H)$ for some $a \in U, b
\in L$.
The \textit{merging of a meta-arrow} $U \rightrightarrows L$ is the
operation of replacing every directed edge $a \to b \in E(H)$, $a \in
U, b \in L$, with $a\uedge b$.
The merging of $U \rightrightarrows L$ is called \textit{feasible} if
the following two conditions are satisfied:
\begin{enumerate}[(ii)]
\item[(i)]$K \equiv\pa(L) \cap U$ is a clique in $H$, and
\item[(ii)]$\pa(L) \setminus U \subseteq\pa(b)$ for any $b \in K$.
\end{enumerate}
By this definition, the merging is feasible if and only if $\pa(L)$ is
a clique in the closure graph $\clo{H}(U)$.
Moreover, for the resulting graph $H'$ and the chain component $M$
obtained by the merging of $U \rightrightarrows L$,
$\pa_H(L)$ is a clique in $\clod{H}(M)$.
%
\begin{figure}

\includegraphics{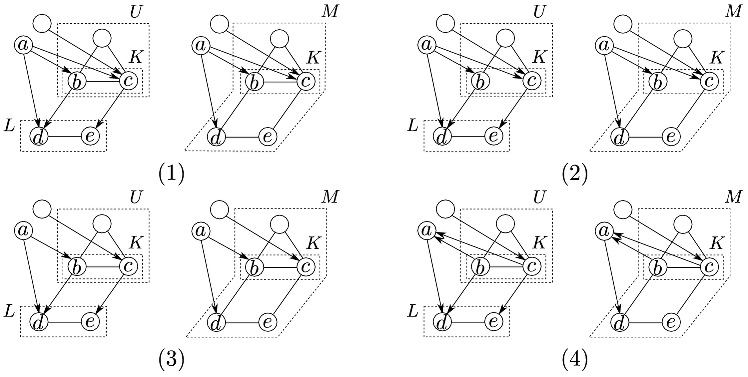}

\caption{Examples of a feasible merging (1) and examples of infeasible
merging (2), (3), (4).}
\label{fig:merging}
\end{figure}
%
\begin{exmp}
We show some examples of feasible and infeasible mergings in Figure~\ref{fig:merging}.
The left-hand side graphs of these figures are input graphs containing
$K = \{ b,c \}, L = \{ d,e \}$ and $\pa(L) = \{ a,b,c \}$,
and the right-hand side graphs the resulting graphs obtained by the
merging $U \rightrightarrows L$ in the input ones.
In Figure~\ref{fig:merging}(1), $K$ is a clique, and $\pa(L) \setminus U = \{ a \} =
\pa(b) \subset\pa(c)$.
Thus both conditions are satisfied, and the merging is feasible.
Especially, the input graph and resulting graph have the same complexes.
In Figure~\ref{fig:merging}(2), since $K$ is not a clique, the condition (i) is not satisfied.
Also in Figure~\ref{fig:merging}(3), the condition (ii) is violated because $\pa(L)
\setminus U = \{ a \} \nsubseteq\pa(c)$.
Hence, the mergings of $U \rightrightarrows L$ in (2) and (3) are infeasible.
Note that, in Figure~\ref{fig:merging}(2), the merging of $U \rightrightarrows L$
destroys a complex $b \to d\uedge e \leftarrow c$.
Similarly, a complex $a \to d\uedge e \leftarrow c$ vanishes in Figure~\ref{fig:merging}(3).
As in Figure~\ref{fig:merging}(3), the condition (ii) is not satisfied in (4).
In this case, the resulting graph has a directed cycle $a \to d\uedge
e\uedge c \to a$,
and hence, it is not a chain graph.
\end{exmp}

As shown in these examples, the resulting graph by a feasible merging
is also a chain graph and has the same complexes as the input graph.
Thus, we have the following important lemma from Theorem~\ref
{thm:cg_equivalent}.
%
\begin{lem}[(Studen\'y \textit{et al.} \cite{sturovste})] %
Let $H$ be a chain graph and $H'$ be a graph obtained by the merging of
$U \rightrightarrows L$ in $H$.
Then $\mathcal{M}_H = \mathcal{M}_{H'}$ if and only if the merging is
feasible.
\end{lem}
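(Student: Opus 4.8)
The plan is to prove the statement by appealing to the characterization of equivalent chain graphs in \refthm{cg_equivalent}, which says that two chain graphs are equivalent if and only if their underlying graphs coincide and they have the same complexes. Let $H'$ be the graph obtained from $H$ by merging the meta-arrow $U \rightrightarrows L$. First I would observe that $H$ and $H'$ automatically share the same underlying graph, since merging only replaces directed edges $a \to b$ (with $a \in U, b \in L$) by undirected edges $a\uedge b$, which does not change adjacency. Hence by \refthm{cg_equivalent} the equivalence $\mathcal{M}_H = \mathcal{M}_{H'}$ is equivalent to the statement that $H$ and $H'$ have the same complexes, and the whole proof reduces to showing: \emph{$H'$ is a chain graph with the same complexes as $H$ if and only if merging is feasible.}

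For the ``if'' direction, I would assume conditions (i) and (ii) of feasibility and argue in two steps. First, that $H'$ is indeed a chain graph, i.e.\ contains no directed cycle: any directed cycle created by the merge would have to use one of the newly undirected edges inside the new chain component $M$, and I would use condition (ii), $\pa(L) \setminus U \subseteq \pa(b)$ for all $b \in K$, to show no directed cycle can arise (as illustrated by \reffig{merging}(4), where violating (ii) produces the directed cycle $a \to d\uedge e\uedge c \to a$). Second, that the complexes are preserved: merging an arrow $a \to b$ into $a\uedge b$ could only destroy a complex of the form $a \to b \uedge \cdots$, and the feasibility conditions guarantee that $a$ is already adjacent to the relevant vertices, so no complex is destroyed and none is created. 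Here I would lean on the remark already recorded in the excerpt that merging is feasible if and only if $\pa(L)$ is a clique in the closure graph $\clo{H}(U)$, which packages conditions (i) and (ii) into a single clique condition that is tailored to ruling out surviving complexes with arrowheads into $L$.

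For the ``only if'' direction, I would prove the contrapositive: if merging is infeasible, then either $H'$ is not a chain graph or $H$ and $H'$ differ in their complexes, so that $\mathcal{M}_H \ne \mathcal{M}_{H'}$. If condition (i) fails, then $K = \pa(L) \cap U$ is not a clique, so there are non-adjacent $b,c \in K$ with $b \to d, c \to d$ for some $d \in L$; merging destroys the complex $b \to d \uedge \cdots \leftarrow c$ (as in \reffig{merging}(2)). If condition (ii) fails, there is $b \in K$ and a vertex $a \in \pa(L)\setminus U$ with $a \notin \pa(b)$; then either a complex of the form $a \to d \uedge \cdots \leftarrow b$ is destroyed (as in \reffig{merging}(3)) or a directed cycle is introduced so that $H'$ fails to be a chain graph (as in \reffig{merging}(4)). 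In each case the change in complexes, or the failure of the chain-graph property, yields $\mathcal{M}_H \neq \mathcal{M}_{H'}$ via \refthm{cg_equivalent}.

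The main obstacle I anticipate is the careful case analysis in the ``only if'' direction, namely exhibiting, whenever feasibility fails, an explicit complex in exactly one of $H,H'$ (or an explicit directed cycle in $H'$), and verifying that no other edges present in the graph accidentally chord the would-be complex or prevent the cycle. The subtlety is that a complex requires the \emph{absence} of all other edges among its vertices, so I must check not only that the relevant directed/undirected edges exist but also that the graphs genuinely induce a complex rather than a larger complete-enough configuration; the feasibility conditions are precisely what control this, so the bookkeeping of which edges are forced to be present or absent is where the real work lies.
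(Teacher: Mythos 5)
Your approach is correct and is essentially the one the paper takes: the paper does not prove this lemma itself (it cites Studen\'y et al.), but its justification is exactly your reduction---merging leaves the underlying graph unchanged, so by \refthm{cg_equivalent} everything hinges on whether $H'$ is a chain graph with the same complexes, and conditions (i) and (ii) of feasibility characterize precisely that, with the paper's \reffig{merging}(2)--(4) illustrating your three failure cases. The details you defer (extracting a minimal destroyed or created complex via a shortest undirected path in $H_L$ between children of the offending parents, and the component-level acyclicity argument when (ii) fails because $b\to a$ is present) are the standard ones and do go through.
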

The operation of merging can be performed without leaving the %
equivalence class.
Especially, every larger %
equivalent graph is obtained by a series of feasible merging operations.
%
\begin{them}[(Studen\'y \textit{et al.} \cite{sturovste})] \label{thm:merging}%
Let $G$ and $H$ be chain graphs such that $\mathcal{M}_G = \mathcal
{M}_H$ and $H \ge G$.
Then there exists a sequence of chain graphs $G = H_1, \dots, H_r = H,
r \ge1$, such that
$H_{i+1}$ is obtained by the operation of feasible merging in $H_i$ for
all $i = 1, \dots, r-1$.
\end{them}
From Theorem~\ref{thm:frydenberg},
for proving that equivalent chain graphs have a common property,
it suffices to prove that the property is shared by a pair of graphs of
the class
such that one is obtained by a feasible merging from the other.

\subsection{Uniqueness of standard imsets for chain graphs}
In this section, we show that %
equivalent chain graphs have the same standard imset.
%
\begin{them} \label{thm:cg_unique}
Let $H_1, H_2$ be chain graphs.
Then $\mathcal{M}_{H_1} = \mathcal{M}_{H_2}$ if and only if
$u_{H_1} = u_{H_2}$.
\end{them}

To prove this theorem, the following fact is useful.
%
\begin{lem}[(cf. the proof of Theorem~20 in \cite{sturovste})] \label{lem:20}
For a chain graph $H$ which is %
equivalent to some directed acyclic graph,
let $H'$ be a graph obtained from $H$ by a feasible merging of a
meta-arrow $U \rightrightarrows L$,
and let $M$ denote the merged chain component.
Then $K \subseteq N$ is a maximal clique of $ \clod{H}(M)$
if and only if $K$ is either a maximal clique of $ \clo{H}(L)$ or a
maximal clique of $ \clo{H}(U)$
different from $\pa_H(L)$.
\end{lem}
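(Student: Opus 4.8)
The plan is to analyze precisely how the maximal cliques of the three closure graphs $\clo{H}(U)$, $\clo{H}(L)$, and $\clod{H}(M)$ are related, where $M$ is the chain component obtained by merging $U \rightrightarrows L$. First I would set up notation: write $K = \pa_H(L) \cap U$ and recall from the discussion of feasible merging that $\pa_H(L)$ is a clique in $\clo{H}(U)$ (feasibility) and remains a clique in $\clod{H}(M)$. The merged component is $M = U \cup L$, and its parent set in $H'$ is $\pa_{H'}(M) = (\pa_H(U) \cup \pa_H(L)) \setminus M$. The key structural observation is that the closure graph $\clod{H}(M) = (H'_{M \cup \pa_{H'}(M)})^{\mor}$ decomposes, along the clique $\pa_H(L)$, into a piece coming from $\clo{H}(U)$ and a piece coming from $\clo{H}(L)$, since merging converts the directed edges from $U$ into $L$ into undirected ones but introduces no moral edges beyond those already forced by feasibility.

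The main steps I would carry out are as follows. I would first argue the ``if'' direction: that any maximal clique of $\clo{H}(L)$ is a maximal clique of $\clod{H}(M)$, and that any maximal clique of $\clo{H}(U)$ other than $\pa_H(L)$ is likewise a maximal clique of $\clod{H}(M)$. For the $\clo{H}(L)$ cliques, I would note that the induced subgraph of $\clod{H}(M)$ on $L \cup \pa_{H'}(L)$ agrees with $\clo{H}(L)$ because the feasibility conditions (i) and (ii) guarantee no new moral edges are created among $\pa_H(L)$. For the $\clo{H}(U)$ cliques, the subgraph on $U \cup \pa_{H'}(U)$ is unchanged by merging, and a maximal clique $K'$ of $\clo{H}(U)$ stays maximal in $\clod{H}(M)$ unless it gets absorbed into a larger clique; the only way that happens is if $K' \subseteq \pa_H(L)$, i.e.\ $K' = \pa_H(L)$ by maximality, which is exactly the excluded case, since $\pa_H(L)$ now connects into $L$.

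For the ``only if'' direction I would take an arbitrary maximal clique $K'$ of $\clod{H}(M)$ and show it must be one of the two listed types. Here I would use that $\pa_H(L)$ is a clique separator in $\clod{H}(M)$ separating $U \setminus \pa_H(L)$-type vertices from $L$: any clique is complete, so it cannot contain a vertex strictly on the $U$-side and a vertex strictly on the $L$-side simultaneously unless both lie in $\pa_H(L)$. Consequently $K'$ lies entirely within $U \cup \pa_{H'}(U)$ or entirely within $L \cup \pa_{H'}(L)$, placing it as a maximal clique of $\clo{H}(U)$ or of $\clo{H}(L)$ respectively; in the former case it cannot equal $\pa_H(L)$ because $\pa_H(L)$ is no longer maximal in $\clod{H}(M)$ (it extends into $L$).

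The step I expect to be the main obstacle is verifying that merging creates no extraneous moral edges, so that $\clod{H}(M)$ really is the clean amalgamation of $\clo{H}(U)$ and $\clo{H}(L)$ glued along $\pa_H(L)$. Precisely, I must check that when the directed edges $a \to b$ ($a \in U$, $b \in L$) become undirected, the parents of $L$ that were morally connected through $L$ in $\clo{H}(L)$ do not induce additional adjacencies in $\clod{H}(M)$ beyond what feasibility already provides, and symmetrically that no parent of $U$ gets spuriously connected to a vertex of $L$. This is where conditions (i) and (ii) of feasible merging do the real work, and I would lean on \refprop{andersson} together with \reflem{andersson} to control the decomposability and the moralization carefully. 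Once this amalgamation structure is established, the clique bookkeeping in both directions follows routinely.
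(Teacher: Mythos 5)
The paper itself does not prove this lemma; it imports it from the proof of Theorem 20 in Studen\'y--Roverato--\v{S}tep\`anov\`a, so there is no in-paper argument to compare against and your proposal must stand on its own. Your amalgamation picture --- $\clod{H}(M)$ is the union of $\clo{H}(U)$ and $\clo{H}(L)$ glued along the clique $\pa_H(L) = (U\cup\pa_H(U))\cap(L\cup\pa_H(L))$, with $\pa_H(L)$ separating $L$ from everything else --- is the right skeleton. However, the step you flag as the main obstacle is actually a short consequence of feasibility that you should simply carry out rather than defer: since $U\rightrightarrows L$ is a meta-arrow, $K=\pa_H(L)\cap U\neq\emptyset$, and condition (ii) makes every $q\in\pa_H(L)\setminus U$ a parent of every $b\in K\subseteq U$, whence $\pa_H(L)\setminus U\subseteq \pa_H(U)$ and $\pa_{H'}(M)=\pa_H(U)$. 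So moralizing the merged component completes exactly $\pa_H(U)$, which is already complete in $\clo{H}(U)$; together with conditions (i) and (ii) supplying, inside $H$, all edges of the completed $\pa_H(L)$ that touch $K$, this yields $E(\clod{H}(M))=E(\clo{H}(U))\cup E(\clo{H}(L))$ and that the induced subgraphs of $\clod{H}(M)$ on $U\cup\pa_H(U)$ and on $L\cup\pa_H(L)$ are exactly the two original closure graphs.

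The genuine gap is in the ``if'' direction for the cliques coming from $L$. You assert that every maximal clique of $\clo{H}(L)$ stays maximal in $\clod{H}(M)$, but your separator argument only protects cliques that contain a vertex of $L$; a maximal clique of $\clo{H}(L)$ contained in $\pa_H(L)$ (necessarily equal to $\pa_H(L)$, since $\pa_H(L)$ is a clique) could a priori be absorbed into a larger clique reaching into $U\cup\pa_H(U)$. You also invoke, without proof, that ``$\pa_H(L)$ extends into $L$'' when excluding it on the $U$-side. What is needed in both places is the claim that $\pa_H(L)$ is never a maximal clique of $\clo{H}(L)$, i.e.\ some vertex of $L$ is adjacent to all of $\pa_H(L)$. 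This is false for general chain graphs (consider a complex $a\to l_1\uedge l_2\leftarrow b$) and genuinely uses the hypothesis that $H$ is equivalent to a directed acyclic graph: by \refprop{andersson}, $\clo{H}(L)$ is decomposable; in a clique tree of $\clo{H}(L)$, if $\pa_H(L)$ were a maximal clique, connectedness of the chain component $L$ would force the node $\pa_H(L)$ to be a leaf, and then any vertex of $\pa_H(L)$ outside the unique attached separator would have all of its neighbours inside $\pa_H(L)$, contradicting the fact that every element of $\pa_H(L)$ has a child in $L$. With this fact supplied, every maximal clique of $\clo{H}(L)$ meets $L$ and your clique bookkeeping closes in both directions.
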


In a chain graph $H$ which is equivalent to some directed acyclic graph,
every mp-subgraph of $\clo{H}(C), C \in\mathcal{C}_H$, is complete,
because a closure graph $\clo{H}(C)$ is decomposable.
As mentioned in Section~\ref{sec:graphs},
the graph obtained by adding edges to an undirected graph such that its all
mp-components become maximal cliques is decomposable.
The following lemma can be easily proved by Lemma~32 of \cite{sturovste}
and Lemma~2.1(i), (ii) of \cite{leimer}.
%
\begin{lem} \label{lem:mp-comp}
For a chain graph $H$, define $H'$ and $M$ as in Lemma~\ref{lem:20}.
Then $K \subseteq N$ is an mp-component of $ \clod{H}(M)$
if and only if $K$ is either an mp-component of $ \clo{H}(L)$ or an
mp-component of $ \clo{H}(U)$
different from $\pa_H(L)$.
\end{lem}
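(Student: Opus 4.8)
The plan is to reduce Lemma \ref{lem:mp-comp} to Lemma \ref{lem:20} by passing to the auxiliary graph $\tilde{H}$ in which every mp-component of every closure graph has been filled in to a clique. The key observation I would exploit is the compatibility of the two constructions: merging of $U \rightrightarrows L$ and the mp-component completion commute well enough that the closure graph $\tilde{H}$ restricted to the relevant chain components agrees with the closure graph of the completion of $H$. Concretely, I would first verify that $\tilde{H}$ is again a chain graph equivalent to a directed acyclic graph (this is already asserted in the excerpt preceding the statement, via Leimer's characterization and \refprop{andersson}), and that feasible merging of $U \rightrightarrows L$ remains feasible in $\tilde{H}$, producing a merged component $M$ whose completed closure graph $\clo{\tilde{H}}(M)$ is exactly the mp-completion of $\clod{H}(M)$.

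The crux of the argument is the translation dictionary between the two graphs. In a decomposable graph the mp-components \emph{are} the maximal cliques, by Leimer's theorem. Since $\clo{\tilde{H}}(C)$ is decomposable for each chain component $C$ (as $\tilde{H}$ is equivalent to a DAG), its maximal cliques coincide with its mp-components, which in turn coincide with the mp-components $\mathcal{V}_{\clo{H}(C)}$ of the original closure graph $\clo{H}(C)$. The point is that filling each mp-component of $\clo{H}(C)$ into a clique turns the mp-components into the maximal cliques without merging or splitting them, because an mp-component completion is exactly the decomposable cover described in \refsec{graphs}. I would state this correspondence as: for every $C \in \mathcal{C}_H$,
\[
\mathcal{K}_{\clo{\tilde{H}}(C)} = \mathcal{V}_{\clo{H}(C)},
\]
and analogously $\pa_{\tilde{H}}(L) = \pa_H(L)$, so that the distinguished clique $\pa_H(L)$ of Lemma \ref{lem:20} corresponds precisely to the distinguished mp-component $\pa_H(L)$ appearing in the present statement.

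With this dictionary in place, I would simply apply Lemma \ref{lem:20} to $\tilde{H}$. That lemma says $K$ is a maximal clique of $\clo{\tilde{H}'}(M)$ if and only if it is a maximal clique of $\clo{\tilde{H}}(L)$ or a maximal clique of $\clo{\tilde{H}}(U)$ other than $\pa_{\tilde{H}}(L) = \pa_H(L)$. Rewriting each occurrence of ``maximal clique of $\clo{\tilde{H}}(\,\cdot\,)$'' as ``mp-component of $\clo{H}(\,\cdot\,)$'' through the correspondence above, and noting that $\clo{\tilde{H}'}(M)$ is the mp-completion of $\clod{H}(M)$ so its maximal cliques are the mp-components of $\clod{H}(M)$, yields exactly the claimed equivalence for $\clod{H}(M)$.

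The main obstacle I anticipate is justifying cleanly that the mp-completion and the merging operation interact as claimed — in particular that $\clo{\tilde{H}'}(M)$ is genuinely the mp-completion of $\clod{H}(M)$ rather than some further-filled graph, and that feasibility of the merging is preserved under completion (feasibility is equivalent, by the discussion preceding Lemma \ref{lem:20}, to $\pa(L)$ being a clique in $\clo{H}(U)$, and completing mp-components only adds edges, so feasibility can only be preserved or newly created; I would check it is exactly preserved because $\pa_H(L)$ being a clique in $\clo{\tilde{H}}(U)$ follows from, and is consistent with, the original feasibility). Verifying these commutation facts carefully, using the uniqueness of $\mathcal{V}_G$ from Leimer \cite{leimer} to rule out spurious merging or splitting of mp-components, is where the real work lies; once established, the conclusion is a direct transcription of Lemma \ref{lem:20}.
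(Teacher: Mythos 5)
Your proposal is correct and follows essentially the same route as the paper, which states this lemma with only the one-line justification that it is obtained by ``adapting'' Lemma \ref{lem:20} to the auxiliary graph $\tilde{H}$ in which all mp-components of the closure graphs are completed to cliques. You in fact supply more detail than the paper does, correctly identifying the dictionary $\mathcal{K}_{\clo{\tilde{H}}(C)} = \mathcal{V}_{\clo{H}(C)}$ (Leimer) and the commutation of completion with feasible merging as the points needing verification.
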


The chain graph in Lemma~\ref{lem:mp-comp} need not be
equivalent to an acyclic directed graph as in Lemma~\ref{lem:20}.
Also note that  $\pa_{H}(L)$ can never be an mp-component of
$\overline H(L)$, and, therefore,
never an mp-component of $\overline{H'}(M)$. This can be shown by
contradiction: if $\pa(L)$
is an mp-component of $\overline H(L)$, then an ordering $V_1, \dots,
V_m$ of its mp-components
satisfying RIP and $V_1 = \pa(L)$ exists
from Theorem~2.5 of \cite{leimer}.
Then $V_1 \setminus S_2$ and $V_2 \setminus S_2$ are separated by $S_2$
from Lemma~\ref{lem:mp-separation-obvious}.
However,
$x \in V_1 \setminus S_2$ must have a child in
$L$, which leads to some vertex in $V_2 \setminus S_2$ since $L$ is a
connected component. This
gives a contradiction with the above separation. %

We now prove Theorem~\ref{thm:cg_unique} using this result.
\begin{pf*}{Proof of Theorem~\ref{thm:cg_unique}}
Let $H = H_1$. Note that the standard imset for $H$ given by \eqref
{eq:cgim} is
\begin{eqnarray}
u_H &=& \delta_N - \delta_\emptyset+ \sum
_{C \in\mathcal{C}_H} \biggl\{ \delta_{\pa(C)} - \sum
_{V \in\mathcal{V}_{\clo{H}(C)}} \delta_{V} + \sum
_{S \in
\mathcal{S}_{\clo{H}(C)}} \nu_{\clo{H}(C)}(S) \cdot\delta_S \biggr
\}
\nonumber
\\
&&{}+ \sum_{C \in\mathcal{C}_H} \sum_{V \in\mathcal{V}_{\clo{H}(C)}}
\sum_{G \in\mathfrak{T}(\clo{H}(C)_V)} u_{G}.
\nonumber
\end{eqnarray}

We first show that $u_H = u_{H'}$ for a chain graph $H'$ obtained from
$H$ by feasible merging of a meta-arrow $U \rightrightarrows L$.
Let $M$ denote the merged chain component.
Since the closure graphs for every chain component $C$ except for
$U,L,M$ are the same in $H$ and in $H'$,
we have to show that
the contribution in $u_{H'}$ corresponding to $M$ is the sum of
contributions in $u_H$ corresponding to $L$ and $U$.
Since $\pa(L)$ is a clique
(in all three considered
graphs $\overline H(L)$, $\overline H(U)$ and $\overline{H'}(M)$),
letting $V = \pa(L)$, we have
\[
\sum_{G \in\mathfrak{T}(\clo{H}(U)_V)} u_G = \sum
_{G \in\mathfrak
{T}(\clo{H}(L)_V)} u_G = \sum_{G \in\mathfrak{T}(\clo{H'}(M)_V)}
u_G = 0.
\]
Also, from Lemma~\ref{lem:mp-comp}, mp-components in $ \clod{H}(M)$ except
for $\pa_H(L)$ are identical with those of either $\clo{H}(L)$ or
$\clo{H}(U)$.
Therefore, we have
%
\begin{eqnarray}\label{eq:1}
\sum_{V \in\mathcal{V}_{\clo{H}(L)}} \sum_{G \in\mathfrak
{T}(\clo{H}(L)_V)}
u_G + \sum_{V \in\mathcal{V}_{\clo{H}(U)}} \sum
_{G \in\mathfrak{T}(\clo{H}(U)_V)} u_G  = \sum_{V \in\mathcal{V}_{\clod{H}(M)}}
\sum_{G \in\mathfrak
{T}(\clod{H}(M)_V)} u_G,
\end{eqnarray}
whether $\pa(L)$ is an mp-component of $\clo{H}(U)$ %
or not.

From \eqref{eq:1} and $\pa_{H}(U) = \pa_{H'}(M)$ (see Lemma~32 in
\cite{sturovste}), $u_H = u_{H'}$ is reduced to
\begin{eqnarray*}
&&- \sum_{V \in\mathcal{V}_{\clo{H}(L)}} \delta_{V} + \sum
_{S \in
\mathcal{S}_{\clo{H}(L)}} \nu_{\clo{H}(L)}(S) \cdot\delta_S +
\delta_{\pa_H(L)}
\\
&&\qquad {}- \sum_{V \in\mathcal{V}_{\clo{H}(U)}} \delta_{V} + \sum
_{S \in
\mathcal{S}_{\clo{H}(U)}} \nu_{\clo{H}(U)}(S) \cdot\delta_S
\\
&&\quad = - \sum_{V \in\mathcal{V}_{\clo{H'}(M)}} \delta_{V} + \sum
_{S
\in\mathcal{S}_{\clo{H'}(M)}} \nu_{\clo{H'}(M)}(S) \cdot
\delta_S,
\end{eqnarray*}
which is the same as the equation (7) in \cite{sturovste} if all
mp-components $V$ are maximal cliques.
Indeed, one can
construct a chain graph $H^*$ over vertices $L\cup U \cup\pa(L)$
having $U$ and $L$ as components
(and possibly some other singleton components in $\pa    (L) \setminus U$)
such that the mp-subgraphs of
$\overline H(L)$ and $\overline H(U)$ are maximal cliques in $\overline
{H^*}(L)$ and $\overline{H^*}(U)$.
This graph is equivalent to an acyclic directed graph, which is the
assumption for validity of
the formula (7) in \cite{stuvom}. %
Then
the similar argument for the proof of Proposition~20 in \cite
{sturovste} holds by Lemma~\ref{lem:mp-comp} and Theorem~2.5 in \cite{leimer}
for the above equation.
Thus we have $u_H = u_{H'}$.

Let $H_\infty$ be the largest chain graph (cf. Theorem~\ref
{thm:frydenberg}) in the equivalence class containing $H_1, H_2$.
Then we have $u_{H_1} = u_{H_\infty}$ from Theorem~\ref{thm:merging}.
Also we have $u_{H_2} = u_{H_\infty}$, which implies Theorem~\ref
{thm:cg_unique}.
\end{pf*}

\section{Concluding remarks}
\label{sec:remarks}
In this paper, we defined standard imsets for undirected graphical
models and
chain graphical models. The crucial concept to derive them was a
minimal triangulation.
For an undirected graph, its imset was defined through all minimal
triangulations of the graph.
Moreover, we gave a more brief form of a standard imset using the
structure of mp-subgraphs.
For a chain graph, we generalized a triangulation of undirected graph.
Then a standard imset for a chain graph was derived through an
analogous argument as the undirected case.
We also showed the uniqueness of standard imsets for equivalent chain graphs.

For directed acyclic graphs and decomposable graphs, the number of
non-zero elements of their standard imsets is linear in $|N|$,
while \eqref{eq:ugim} and \eqref{eq:cgim} may have exponential number
of non-zero elements.
Especially, for a prime undirected graph, imsets defined by \eqref
{eq:ugim1} coincide with \eqref{eq:ugim}.
Thus there is a question whether we can find an imset with smaller
numbers of non-zero elements. %

This is related to the degree of combinatorial imsets. The degree of a
combinatorial imset is defined as
the sum of positive coefficients when it is written as a non-negative
integer combination of elementary imset \cite{stu2005}.
An imset with the smallest degree is considered %
as a basic representative of an equivalence class in Section~7.3 in
\cite{stu2005}.
In fact, a standard imset for a directed acyclic graph has the smallest degree.
Our definition of a standard imset has the smallest degree for some graphs.
One of such examples is a 4-cycle graph. It is easy to see that the
smallest degree in the equivalence class is 2, and
\eqref{eq:ugim} achieves this bound.
Although, for other cycle graphs, \eqref{eq:ugim} does not achieve the
smallest degree,
it may be possible to derive an imset with the smallest degree through
our definition.


\section*{Acknowledgements}
We are very grateful to a referee for very careful and constructive comments.



\printhistory

\end{document}